\documentclass[11pt]{amsart}

\usepackage{evearticle}
\usepackage{stmaryrd}

\newcommand{\bes}{\mathrm{BES}}

\begin{document}

\title{The conformal dimension of the Brownian tree is one}
\author{Jason Miller and Yi Tian}

\date{\today}

\begin{abstract}
The Brownian tree, also known as the continuum random tree, is a canonical random compact, geodesic $\BR$-tree that arises as the universal scaling limit for numerous models of discrete random trees. A key quasisymmetric invariant of a metric space is its conformal dimension, defined as the infimum of the Hausdorff dimensions over all quasisymmetrically equivalent spaces. This value is always bounded below by the space's topological dimension and above by its Hausdorff dimension. In the present paper, we prove that the conformal dimension of the Brownian tree is $1$, matching its topological dimension.
\end{abstract}

\maketitle

\tableofcontents

\setlength{\parindent}{0pt}
\setlength{\parskip}{0.5\baselineskip plus 1pt minus 1pt}

\section{Introduction}

The \emph{Brownian tree} (a.k.a.~the \emph{continuum random tree}), introduced by Aldous \cite{MR1085326,MR1166406,MR1207226}, is a canonical random compact, geodesic $\BR$-tree that arises as the universal scaling limit for numerous models of discrete random trees, e.g., critical Galton--Watson trees, critical multi-type Galton--Watson trees \cite{MR2469338}, uniform unordered binary trees \cite{MR2829313}, uniform unordered trees \cite{MR3050512}, uniform unlabeled unrooted trees \cite{MR3983790}, critical random graphs \cite{abg2012criticalrandomgraphs}, and the minimum spanning tree \cite{adbgm2017mst}. It is also the scaling limit of random dissections \cite{MR3382675}, random planar maps with a unique large face \cite{MR3342658}, random outerplanar maps \cite{MR3573291}, and random graphs from subcritical classes \cite{MR3551197}. The Brownian tree also has connections to branching processes, superprocesses \cite{MR1714707}, and random geometry \cite{cs2004tightness,MR3112934,MR3070569,s2016hc,MR4007665,dms2021mating}.  There are many other examples in the literature where the Brownian tree arises that we have not listed here because they are far too numerous.

Let $(X, d_X)$ and $(Y, d_Y)$ be metric spaces. A homeomorphism $f \colon X \to Y$ is said to be \emph{quasisymmetric} if we can find an increasing homeomorphism $\eta \colon \BR_{\ge 0} \to \BR_{\ge 0}$ that bounds the distortion of relative distances:
\begin{equation*}
    \frac{d_Y(f(x), f(z))}{d_Y(f(y), f(z))} \le \eta\!\left(\frac{d_X(x, z)}{d_X(y, z)}\right)
\end{equation*}
for all triples of distinct points $x, y, z \in X$. The class of quasisymmetric mappings is closed under both inversion and composition. Consequently, the existence of such a mapping establishes an equivalence relation, and we say that $(X, d_X)$ and $(Y, d_Y)$ are \emph{quasisymmetrically equivalent}. This concept was originally formulated by \cite{MR595180} to extend the theory of conformal and quasiconformal mappings to arbitrary metric spaces.

The \emph{conformal dimension}, introduced by Pansu \cite{MR1024425}, is a quasisymmetric invariant that plays an important role in the classification of metric spaces up to quasisymmetric equivalence. It is defined as the infimal Hausdorff dimension among all spaces within a given quasisymmetric equivalence class. By definition, this dimension must lie between the topological dimension of the space and its original Hausdorff dimension.

The problem of computing or estimating conformal dimension has been studied for many general metric spaces and concrete deterministic fractal spaces \cite{MR1676353,MR1833245,MR2135168,MR2239342,MR2268118,MR2846307,MR3213834,MR3276005,MR3815187,MR4056527}. Since many naturally occurring fractals arise from random processes, it is very natural and interesting to study their conformal dimension. In \cite{MR4259151}, it was shown that the conformal dimension of fractal percolation is almost surely strictly smaller than its Hausdorff dimension, although explicit values for the conformal dimension have not yet been determined. The conformal dimension of the graph of a one-dimensional Brownian motion is almost surely equal to its Hausdorff dimension $3/2$ \cite{MR4912977}. The authors proved in a previous paper \cite{ConfDimBSph} that the conformal dimension of the Brownian sphere is almost surely equal to its topological dimension $2$.

The main result of the present paper is that the conformal dimension of the Brownian tree matches its topological dimension.

\begin{theorem}\label{thm:main}
    Almost surely, the Brownian tree has conformal dimension $1$.
\end{theorem}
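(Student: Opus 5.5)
The lower bound is immediate. The Brownian tree is a nondegenerate compact connected metric space, so its topological dimension is $1$. Hausdorff dimension of any space is at least its topological dimension, and quasisymmetric maps are homeomorphisms. Hence every space quasisymmetrically equivalent to the tree has Hausdorff dimension at least $1$, and it remains to show that for every $\epsilon>0$ there is almost surely a metric $d_\epsilon$ on $\CT$ such that:
\begin{itemize}
\item $\mathrm{id}\colon(\CT,d)\to(\CT,d_\epsilon)$ is quasisymmetric, and
\item the Hausdorff dimension of $(\CT,d_\epsilon)$ is at most $1+\epsilon$.
\end{itemize}

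I would build $d_\epsilon$ as a tree metric obtained by reweighting length. Give the skeleton of $\CT$ a new length measure $\ell_\epsilon$, and let $d_\epsilon(x,y)$ be the $\ell_\epsilon$-mass of the geodesic $[x,y]$. A tree metric defined this way is automatically geodesic, so quasisymmetry reduces to a doubling-type comparability condition on the weights.

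I would realize this through a Semmes-type criterion (equivalently, one used for trees by Bonk--Meyer). A reweighting is quasisymmetric if it satisfies two conditions:
\begin{itemize}
\item on each geodesic segment, the weights of adjacent pieces of comparable $d$-length are comparable, and
\item the reweighted length is comparable to the reweighted diameter of the subtree hanging near the segment.
\end{itemize}

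Concretely, I would decompose $\CT$ at dyadic scales $2^{-n}$ into ``pieces''. The key geometric feature is that at scale $r$ the tree looks like a finite branching structure with about $r^{-2}$ pieces, since $\dim_H=2$. The excess dimension comes from the heavy branching. The idea is to assign each piece at scale $r$ a new diameter $r^{1/(1+\epsilon)}$ times a correction chosen so that the sum of new diameters over a cover stays bounded.

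A cleaner route is to use the Brownian excursion coding $\CT=\CT_e$. Define the new metric through a time change that compresses the height function along branches with many offspring and stretches it along thin ones. This amounts to a multiplicative cascade on the dyadic branching structure: each piece carries a weight $W$, the new scale is a product of weights, and the weights are chosen so that the expected count $\sum (\text{new diam})^{1+\epsilon}$ is finite, giving $\dim_H\le 1+\epsilon$ via a standard first-moment covering argument.

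I would use the self-similarity of the Brownian tree, through Aldous's recursive decomposition into three subtrees with Dirichlet$(1/2,1/2,1/2)$ masses, to build these weights. At each branch point splitting into subtrees, assign new sizes that are a power $p$ of the masses, with $p$ close to but above the critical value. Take $p$ so that $\sum_i m_i^{p(1+\epsilon)/2}\cdot(\ldots)$ has expectation less than $1$, giving a supermartingale and finiteness of the $(1+\epsilon)$-dimensional covering sum. In a deterministic branching fractal, one can make a subtree of relative size $m$ have new diameter $m^{s}$ with $s$ large, which makes the dimension approach $1$: the main spine is kept and side branches are shrunk drastically.

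\textbf{The main obstacle is quasisymmetry.} Shrinking side branches by arbitrarily large factors destroys quasisymmetry unless the shrinking is uniformly controlled at every scale and location. Random fluctuations, such as atypically big subtrees or clustered branch points, break uniform bounds. The plan is to shrink only by a fixed factor $\lambda$ per scale relative to the spine, so the distortion is bounded by a function of scale ratios, i.e.\ of $\eta$-type. I would handle atypical regions by showing, via Borel--Cantelli over dyadic scales, that almost surely only finitely many ``bad'' configurations occur at each location up to a uniformly bounded number of scales. I would then verify the three-point condition by reducing any triple $x,y,z$ to the branch point of the geodesic triangle and comparing reweighted lengths of the three legs at comparable scales.
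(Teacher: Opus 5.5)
Your lower bound is fine, but the upper bound has a genuine gap exactly where you locate ``the main obstacle''. No construction in the proposal is shown to be quasisymmetric, and the concrete mechanisms you suggest are not.

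Apply the three-point condition to a point $z$ and points $x,y$ with $d(z,x)=d(z,y)=r$. This forces the new metric to have comparable size at scale $r$ in every direction out of $z$. So any shrinking must depend only on location and scale, never on which branch a point belongs to. The mechanisms in your proposal all depend on branch membership:
\begin{itemize}
\item Giving a subtree of mass $m$ new size $m^{s}$ changes the relative scale of the branches at their common branch point by a power of $m_i/m_j$. This is unbounded, since the tree a.s.\ has branch points with arbitrarily unbalanced subtrees.
\item Suppose side branches lose a factor $\lambda$ per scale that the spine does not. Take a side branch of diameter $\delta$ attached at $z$ and $r\ll\delta$. A point at distance $r$ inside it and a point at distance $r$ on the spine end up at new distances with ratio about $\lambda^{\log_2(\delta/r)}$.
\item If instead the factor $\lambda$ is lost once per generation of side-branching, one can nest $m$ side branches of diameter $\ge r$ within distance $\ll r$ of $z$, and the ratio becomes about $\lambda^{m}$.
\end{itemize}
Borel--Cantelli cannot repair this. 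It controls a fixed or typical point, whereas quasisymmetry is uniform over all triples. With about $r^{-2}$ roughly independent locations at scale $r$, there are a.s.\ locations with on the order of $\log(1/r)$ consecutive bad scales or nesting levels. The ``Semmes-type'' criterion you quote for length reweightings, and the reduction to a ``doubling-type'' condition on a space that is a.s.\ not doubling, would also need justification.

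The missing idea is to decouple quasisymmetry from probability altogether. The paper uses Carrasco Piaggio's hyperbolic fillings (Lemma~\ref{lem:core}). Weights are attached to pairs (center, scale), regularized to lie in $[\eta,1-\eta]$, and compared with neighbours. Whenever the purely deterministic admissibility condition~\eqref{eq:admissibility} holds, the resulting metric $D_\sigma$ is quasisymmetric to $d$, however pathological the local configurations are.

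The weight is $\sigma(x,n)=32\alpha\,\mathbf 1_{E(x,n)}$. Here $E(x,n)$ asks for a subtree of diameter at least $\alpha^{n-1}/4$ branching off the segment of length $4\alpha^n$ from $x$ toward infinity. Admissibility is pure tree geometry. A ball in the middle of a chain crossing an annulus has long pieces of the chain on both sides, in different components of its complement, so one of them hangs off that segment. This yields at least $1/(24\alpha)$ heavy balls per crossing.

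Randomness enters only in the dimension bound. By independence of the Poissonian branches along disjoint portions of the spine, a run of $\ell$ consecutive heavy scales costs probability at most $2^8\alpha^{\ell}$. This gives that the expectation of $\varpi(o,n)^p$ is at most $C^n\alpha^{(p+1)n}$. Re-rooting invariance and the net bound of Lemma~\ref{lem:net} (at most $\alpha^{-(2+\zeta)n}$ balls) then make the $p$-sum tend to $0$ for any $p>1$ once $\alpha$ is small.

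Note also that $D_\sigma$ is not a length reweighting of the skeleton: skeleton segments get dimension slightly above $1$. Restricting yourself to geodesic tree metrics is therefore an extra constraint you would have to justify.
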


A metric space whose conformal dimension is equal to its Hausdorff dimension is referred to as \emph{minimal}. In contrast to \cite{MR4912977}, to the best of our knowledge, the Brownian sphere \cite{ConfDimBSph} and the Brownian tree studied in the present paper are the first random fractals whose conformal dimensions are shown to be equal to their topological dimensions.

The \emph{quasisymmetric uniformization problem} asks for natural conditions under which a given metric space is quasisymmetrically equivalent to a model space. For results regarding standard model spaces, see \cite{MR595180} for $\BS^1$; \cite{MR1930885} alongside \cite{MR3608292,MR4073230,MR4608329,MR4956983} for $\BS^2$; and \cite{MR2854086} for round carpets. Several recent papers have explored these types of questions in the context of trees. For instance, \cite{bonktran2021csst} introduced the \emph{continuum self-similar tree (CSST)} as a deterministic model space for a compact metric $\BR$-tree, possessing the property that the Brownian tree is almost surely homeomorphic (but not quasisymmetrically equivalent) to it. Moreover, \cite{bonkmeyer2022uniformlybranching} proved that a \emph{quasiconformal tree} (a compact metric $\BR$-tree that is doubling and of bounded turning) is quasisymmetrically equivalent to the CSST if and only if it is trivalent and uniformly branching. Furthermore, \cite{MR4107537} proved that any quasiconformal tree is quasisymmetrically equivalent to a geodesic $\BR$-tree, and \cite{MR3660231} proved that its conformal dimension is $1$.

$\BR$-trees also play an important role in the study of Gromov hyperbolic groups. For example, by using the theory of group actions on $\BR$-trees and Rips' machine \cite{MR1346208}, it was proven in \cite{MR1638764} that if the boundary of a one-ended hyperbolic group contains a local cut point, the group splits over a two-ended subgroup.

We remark that the Brownian tree almost surely cannot be embedded quasisymmetrically into~$\BR^n$, or any doubling space \cite{MR4242630}. In particular, the Ahlfors regular conformal dimension and the conformal Assouad dimension (variants of the conformal dimension) of the Brownian tree are almost surely infinite. 

As in \cite{ConfDimBSph}, the arguments in the present paper are motivated by \cite{ConformalGauge}, where the authors constructed metrics quasisymmetrically equivalent to a prescribed one using the technique of hyperbolic fillings and admissible weight functions. Roughly speaking, an admissible weight function assigns a non-negative real number to each metric ball in the original metric space. This number corresponds to the ``relative size'' of the ball compared to its ``parent'' ball under the new metric associated with the weight function. See \Cref{subsubsection:hyperbolic-filling} for a short review and \cite[Section~5]{ConfDimBSph} for more detailed explanations.

We will construct an explicit admissible weight function for the hyperbolic fillings of the Brownian tree (cf.~\Cref{section:weight}). Intuitively, our admissible weight function produces a new metric that ``shrinks the size of the leaves and maintains the size of the skeleton of the tree''. Since the skeleton (the collection of points with degree at least $2$) almost surely has Hausdorff dimension $1$, our new metric decreases the Hausdorff dimension of the tree to near $1$.

\subsection*{Acknowledgements}

J.M.~received support from ERC consolidator grant ARPF (Horizon Europe UKRI G120614). Y.T.~was supported by a Cambridge International Scholarship from the Cambridge Trust. 

\section{Background}

\subsection{Notation}

The notation $\BR$ (resp.~$\BZ$; $\BN$) will be used to denote the set of real numbers (resp.~integers; positive integers). For $a < b$, we shall write $[a, b]_\BZ \defeq [a, b] \cap \BZ$.

Let $(X, D)$ be a metric space. Then we shall write $\diam(X; D) \defeq \sup\{D(x, y) : x, y \in X\}$. For $x \in X$ and $0 < s < t$, we shall write 
\begin{equation*}
    B_t(x; D) \defeq \{y \in X : D(x, y) < t\} \quad \text{and} \quad A_{s,t}(x; D) \defeq \{y \in X : s < D(x, y) < t\}. 
\end{equation*}

\subsection{The Brownian tree}\label{subsection:Brownian-tree}

In the present subsection, we recall the definitions and basic properties of the Brownian tree and its infinite counterpart.

Recall that a topological space $\SCT$ is called an \emph{$\BR$-tree} if, for every pair of points $x, y \in \SCT$, there exists a unique (up to reparameterization) continuous injective mapping $P \colon [0, 1] \to \SCT$ from $x$ to $y$. We shall denote the range of this mapping by $\llbracket x, y\rrbracket$. 

\subsubsection{The Brownian tree}\label{subsubsection:Brownian-tree}

Let $\{X_t\}_{t \in [0, 1]}$ be a normalized Brownian excursion. Define 
\begin{equation*}
    d(s, t) \defeq X_s + X_t - 2\inf_{r \in [s, t]} X_r, \quad \forall 0 \le s \le t \le 1. 
\end{equation*}
Note that $d$ is a pseudo-metric on $[0, 1]$. We define $\SCT \defeq [0, 1]/\sim$, where $s \sim t$ if and only if $d(s, t) = 0$, and let $d_\SCT$ be the induced metric on $\SCT$. Let $\mathop{\mathrm{proj}} \colon [0, 1] \to \SCT$ be the canonical projection mapping. We equip $(\SCT, d_\SCT)$ with the pushforward measure $\SM_\SCT$ of the Lebesgue measure on $[0, 1]$, along with the root $o \defeq \mathop{\mathrm{proj}}(0) = \mathop{\mathrm{proj}}(1)$. The quadruple $(\SCT, d_\SCT, \SM_\SCT; o)$ is called the \emph{Brownian tree} with unit volume.

It is a standard fact that $(\SCT, d_\SCT)$ is almost surely a geodesic $\BR$-tree with Hausdorff dimension equal to $2$. Moreover, conditional on $(\SCT, d_\SCT, \SM_\SCT)$, the root $o$ is distributed according to the volume measure $\SM_\SCT$. 

We shall write $\scn$ for the Brownian excursion measure normalized so that
\begin{equation}\label{eq:Brownian-excursion-measure}
    \scn\!\left(\sup_t e_t > r\right) = \frac1{2r}, \quad \forall r > 0. 
\end{equation}
The Brownian tree measure $\SCN$ is defined via the same procedure described above by replacing the law of the normalized Brownian excursion with the measure $\scn$. Equivalently, $\SCN$ is given by the pushforward of the measure $\BP \otimes \frac1{2\sqrt{2\pi}} a^{-3/2} \, \rd a$ under the scaling $(\SCT, d_\SCT, \SM_\SCT; o) \mapsto (\SCT, a^{1/2} d_\SCT, a \SM_\SCT; o)$, where $\BP$ denotes the law of the Brownian tree with unit volume.

\subsubsection{The infinite Brownian tree}\label{subsubsection:infinite-Brownian-tree}

The infinite Brownian tree is an unbounded variant of the Brownian tree and is constructed in a similar manner. More precisely, let $R$ and $\widetilde R$ be two independent Bessel processes of dimension $3$ ($\bes^3$) starting from $0$. We set $X_t \defeq R_t$ for $t \ge 0$, and $X_t \defeq \widetilde R_{-t}$ for $t < 0$. Define
\begin{equation*}
    d(s, t) \defeq 
    \begin{cases}
        X_s + X_t - 2\inf_{r \in [s, t]} X_r & \text{if } s \le t \le 0 \text{ or } 0 \le s \le t; \\
        X_s + X_t - 2\inf_{r \in (-\infty, s] \cup [t, \infty)} X_r & \text{if } s \le 0 \le t. 
    \end{cases}
\end{equation*}
We define $\SCT \defeq \BR/\sim$, where $s \sim t$ if and only if $d(s, t) = 0$, and let $d_\SCT$ be the induced metric on $\SCT$. Let $\mathop{\mathrm{proj}} \colon \BR \to \SCT$ be the canonical projection mapping. We equip $(\SCT, d_\SCT)$ with the pushforward measure $\SM_\SCT$ of the Lebesgue measure on $\BR$, along with the root $o \defeq \mathop{\mathrm{proj}}(0)$. The quadruple $(\SCT, d_\SCT, \SM_\SCT; o)$ is called the \emph{infinite Brownian tree}.

The infinite Brownian tree $(\SCT, d_\SCT, \SM_\SCT; o)$ satisfies the scaling property that for each deterministic $\lambda > 0$, $(\SCT, \lambda d_\SCT, \lambda^2\SM_\SCT; o)$ has the same law as $(\SCT, d_\SCT, \SM_\SCT; o)$, and the re-rooting property that for each deterministic $t \in \BR$, $(\SCT, d_\SCT, \SM_\SCT; \mathop{\mathrm{proj}}(t))$ has the same law as $(\SCT, d_\SCT, \SM_\SCT; o)$. The latter follows from the fact that for each deterministic $t > 0$, if we write for each $s \ge 0$, 
\begin{equation}\label{eq:re-rooting}
    \begin{dcases}
        R_s^{(t)} \defeq R_t + R_{t + s} - 2\inf_{r \in [t, t + s]} R_r; \\
        \widetilde R_s^{(t)} \defeq 
        \begin{cases}
            R_t + R_{t - s} - 2\inf_{r \in [t - s, t]} R_r & \text{if } s \le t; \\
            R_t + \widetilde R_{s - t} - 2\left(\inf_{r \in [t, \infty)} R_r \wedge \inf_{r \in [s - t, \infty)} \widetilde R_r\right) & \text{if } s > t, 
        \end{cases}
    \end{dcases}
\end{equation}
then $(R, \widetilde R)$ and $(R^{(t)}, \widetilde R^{(t)})$ have the same law.

The infinite Brownian tree can also be constructed as follows. We start with an infinite spine identified with $[0, \infty)$. Let $\{(\SCT_j, t_j)\}_j$ be a Poisson point process (PPP) with intensity measure $2\SCN \otimes \rd t$. The infinite Brownian tree is then obtained by attaching the root of each tree $\SCT_j$ to the infinite spine at the corresponding point $t_j$.

Let $(\SCT, d_\SCT, \SM_\SCT; o)$ be an infinite Brownian tree. For each $x \in \SCT$, there exists a unique geodesic ray $P \colon [0, \infty) \to \SCT$ with $P(0) = x$, whose range we denote by $\llbracket x, \infty\rrparenthesis$. For any pair $x, y \in \SCT$, we define $x \curlywedge y$ as the unique point $z \in \SCT$ satisfying $\llbracket x, \infty\rrparenthesis \cap \llbracket y, \infty\rrparenthesis = \llbracket z, \infty\rrparenthesis$. Additionally, for $t \ge 0$, let $\SR_t(x)$ be the unique point on $\llbracket x, \infty\rrparenthesis$ such that $d_\SCT(x, \SR_t(x)) = t$. Note that $\{\SR_t\}_{t \ge 0}$ forms a semigroup, i.e., $\SR_s(\SR_t(x)) = \SR_{s + t}(x)$ for all $x \in \SCT$ and $s, t \ge 0$. Moreover, the closure of each connected component of $\SCT \setminus \llbracket x, \infty\rrparenthesis$ is naturally a rooted (geodesic) $\BR$-tree. We shall refer to these $\BR$-trees as subtrees branching off $\llbracket x, \infty\rrparenthesis$.

\begin{lemma}\label{lem:metric-ball-volume}
    Let $(\SCT, d_\SCT, \SM_\SCT; o)$ be an infinite Brownian tree. Then for each $\zeta > 0$, there almost surely exists $\varepsilon_\ast \in (0, 1)$ such that 
    \begin{equation}\label{eq:metric-ball-volume}
        \SM_\SCT(B_\varepsilon(z; d_\SCT)) \ge \varepsilon^{2 + \zeta}, \quad \forall z \in B_1(o; d_\SCT), \ \forall \varepsilon \in (0, \varepsilon_\ast]. 
    \end{equation}
\end{lemma}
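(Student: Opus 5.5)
Our plan is a union bound over a discretization of the contour parametrization, combined with a tail estimate for the local time accumulated by the contour process near a fixed point. Since $\SM_\SCT$ is the pushforward of Lebesgue measure under $\mathop{\mathrm{proj}}$, for $z = \mathop{\mathrm{proj}}(t)$ we have
\begin{equation*}
    \SM_\SCT(B_\varepsilon(z; d_\SCT)) \ge \mathrm{Leb}\{s : d(s, t) < \varepsilon\}.
\end{equation*}
The key one-point estimate is as follows. Fix a deterministic $t$. By the re-rooting property \eqref{eq:re-rooting}, the pair $(R^{(t)}, \widetilde R^{(t)})$ consists of two independent $\bes^3$ processes, and $d(t, t+s) = R^{(t)}_s$ for $s \ge 0$, and similarly on the other side. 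Hence $\mathrm{Leb}\{s : d(s,t) < \varepsilon\}$ dominates the total time a $\bes^3$ process from $0$ spends below $\varepsilon$. By Brownian scaling this equals $\varepsilon^2 L$, where $L$ is the occupation time of $[0,1)$. We need the lower tail $\BP(L < \lambda) \le \exp(-c/\lambda)$, or at least $\BP(L<\lambda)\le C_p\lambda^{p}$ for every $p$. This follows because $L \ge$ the first hitting time of $1$ by $\bes^3$, whose Laplace transform is $\sqrt{2u}/\sinh\sqrt{2u}$, which decays like $e^{-\sqrt{2u}}$. The exponential Chebyshev bound then gives $\BP(T_1<\lambda)\le \exp(-c/\lambda)$. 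Taking $\lambda = \varepsilon^{\zeta}$ yields
\begin{equation*}
    \BP\bigl(\mathrm{Leb}\{s : d(s,t)<\varepsilon\} < \varepsilon^{2+\zeta}\bigr) \le \exp(-c\varepsilon^{-\zeta}).
\end{equation*}

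Next we discretize. Points of $B_1(o;d_\SCT)$ are projections of times in some bounded window: since $\bes^3$ is transient, there is a.s.\ a random $K$ with $B_1(o) \subset \mathop{\mathrm{proj}}([-K,K])$. We restrict to the event $\{K \le M\}$ and let $M\to\infty$ at the end. Along dyadic scales $\varepsilon = 2^{-n}$, take the grid $t_i = i\,2^{-4n}$ in $[-M,M]$, which has $O(M2^{4n})$ points. A union bound with the stretched-exponential estimate above (applied with $\varepsilon/2$) gives a summable probability. Borel--Cantelli then shows that a.s., for all large $n$, every grid point $t_i$ satisfies $\SM_\SCT(B_{2^{-n-1}}(\mathop{\mathrm{proj}}(t_i))) \ge 2^{-(n+1)(2+\zeta/2)}$.

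To pass from grid points to arbitrary points, we use Hölder continuity of the contour process. The paths of $R$ and $\widetilde R$ are a.s.\ $1/3$-Hölder on $[-M-1,M+1]$, so every $t\in[-M,M]$ has a grid point with $d(t,t_i) \le C 2^{-4n/3} < 2^{-n-1}$ for large $n$. Then $B_{2^{-n-1}}(\mathop{\mathrm{proj}}(t_i)) \subset B_{2^{-n}}(z)$. For a general $\varepsilon\in[2^{-n-1},2^{-n}]$, we use monotonicity in $\varepsilon$ and absorb constants by choosing the exponent $\zeta/2$ in the intermediate steps, which leaves room to spare. This produces $\varepsilon_\ast$ on $\{K\le M\}$, and taking the union over $M$ finishes the proof.

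We expect the main obstacle to be the one-point lower-tail estimate. In particular, we need to justify via \eqref{eq:re-rooting} that the distances from a deterministic $t$ are governed by independent $\bes^3$ processes and not by the original ones. We also need a decay that beats the polynomial number $2^{4n}$ of grid points. The hitting-time Laplace transform should supply this, since it gives a stretched-exponential rather than merely polynomial bound.
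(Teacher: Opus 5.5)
Your argument is correct, but it takes a longer route than the paper. The paper's proof is purely pathwise. It uses re-rooting only to move the time window $[-T,T]$ to $[T,3T]$, so that times near any $t$ in the window lie on the same side of the gluing point $0$ and $d(s,t)=R_s+R_t-2\inf_{[s,t]}R$. It then observes that $\gamma$-H\"older continuity of $R$ on a compact window gives $d(s,t)\le 2C_\gamma|s-t|^\gamma$. Hence $\mathop{\mathrm{proj}}^{-1}(B_\varepsilon(\mathop{\mathrm{proj}}(t);d_\SCT))$ contains an interval around $t$ of length at least $c\,\varepsilon^{1/\gamma}$, uniformly in $t$. Taking $\gamma<1/2$ with $1/\gamma<2+\zeta$ finishes the proof.

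In other words, the H\"older step you use only for interpolation already proves the lemma on its own if you take the exponent close to $1/2$ rather than $1/3$. The one-point occupation-time tail via the $\bes^3$ hitting-time Laplace transform, the $2^{-4n}$ grid, the union bound and Borel--Cantelli then all become unnecessary.

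What your route buys is a ``typical point plus union bound'' scheme whose only probabilistic input is a one-point estimate with stretched-exponential tails. That is more robust: it would still work in a setting without a uniform modulus of continuity. Pushed further (taking $\lambda\asymp 1/\log(1/\varepsilon)$), it would even give the sharper bound $\SM_\SCT(B_\varepsilon(z;d_\SCT))\ge c\,\varepsilon^2/\log(1/\varepsilon)$. Here, though, L\'evy's modulus of continuity gives that same bound directly, so nothing is gained.

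One small point you leave implicit: for $s\le 0\le t$, the H\"older bound on $d(s,t)$ follows from $d(s,t)\le X_s+X_t$ together with $X_0=0$. The paper sidesteps this case through the shift to $[T,3T]$.
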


\begin{proof}
    Let $\mathop{\mathrm{proj}} \colon \BR \to \SCT$ be the canonical projection mapping. Since there almost surely exists $T > 0$ such that $B_1(o; d_\SCT) \subset \mathop{\mathrm{proj}}([-T, T])$, it suffices to show that for each $T > 0$, there almost surely exists $\varepsilon_\ast \in (0, 1)$ such that~\eqref{eq:metric-ball-volume} holds with $\mathop{\mathrm{proj}}([-T, T])$ in place of $B_1(o; d_\SCT)$. It follows from the discussion surrounding~\eqref{eq:re-rooting} that $(\SCT, d_\SCT, \SM_\SCT; o, \mathop{\mathrm{proj}}([-T, T]))$ and $(\SCT, d_\SCT, \SM_\SCT; \mathop{\mathrm{proj}}(2T), \mathop{\mathrm{proj}}([T, 3T]))$ have the same law. Thus, it suffices to show that there almost surely exists $\varepsilon_\ast \in (0, 1)$ such that~\eqref{eq:metric-ball-volume} holds with $\mathop{\mathrm{proj}}([T, 3T])$ in place of $B_1(o; d_\SCT)$. This follows immediately from the definitions of $d_\SCT$ and $\SM_\SCT$, together with the fact that the paths of a $\bes^3$ process are almost surely locally $\gamma$-H\"older continuous of every order $\gamma \in (0, 1/2)$.
\end{proof}

\begin{lemma}\label{lem:net}
    Let $(\SCT, d_\SCT, \SM_\SCT; o)$ be an infinite Brownian tree. Let $\mathop{\mathrm{proj}} \colon \BR \to \SCT$ be the canonical projection mapping. Let $T > 0$. Given $\SCT$ and $\mathop{\mathrm{proj}}$, let $\{x_n\}_{n \in \BN}$ be conditionally independent samples from $\SM_\Phi|_{\mathop{\mathrm{proj}}([-T, T])}$ (renormalized to be a probability measure). Then for each $\zeta > 0$, almost surely on the event that $B_1(o; d_\SCT) \subset \mathop{\mathrm{proj}}([-T, T])$, there exists $\varepsilon_\ast \in (0, 1)$ such that 
    \begin{equation}\label{eq:net}
        B_1(o; d_\SCT) \subset \bigcup \left\{B_\varepsilon(x_n; d_\SCT) : n \in [1, \varepsilon^{-2 - \zeta}]_\BZ, \ x_n \in B_1(o; d_\SCT)\right\}, \quad \forall \varepsilon \in (0, \varepsilon_\ast].
    \end{equation}
\end{lemma}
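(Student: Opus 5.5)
The plan is a covering argument that combines \Cref{lem:metric-ball-volume} with a union bound and Borel--Cantelli over dyadic scales. (Note that $\SM_\Phi$ in the statement should be read as $\SM_\SCT$.) Write $M \defeq \SM_\SCT(\mathop{\mathrm{proj}}([-T,T])) = 2T$. This is deterministic, so conditionally on $(\SCT, \mathop{\mathrm{proj}})$ each $x_n$ lands in a fixed Borel set $A \subset \mathop{\mathrm{proj}}([-T,T])$ with probability $\SM_\SCT(A)/(2T)$.

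We work with $\varepsilon_k \defeq 2^{-k}$, reducing a general $\varepsilon \in [\varepsilon_{k+1}, \varepsilon_k]$ to scale $\varepsilon_{k+1}$ at the end. Fix $\zeta > 0$ and choose $\zeta' \in (0, \zeta)$ small. By \Cref{lem:metric-ball-volume} with $\zeta'$, almost surely there is $\varepsilon_\ast'$ such that $\SM_\SCT(B_\delta(z; d_\SCT)) \ge \delta^{2+\zeta'}$ for all $z \in B_1(o; d_\SCT)$ and all $\delta \le \varepsilon_\ast'$.

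For each $k$, pick a maximal $\varepsilon_k/4$-separated subset $\{z_i\}$ of $B_1(o; d_\SCT)$. The balls $B_{\varepsilon_k/8}(z_i)$ are disjoint. Each has mass at least $(\varepsilon_k/8)^{2+\zeta'}$ once $\varepsilon_k$ is small, and all are contained in $B_2(o)$, which has finite mass. Hence the number of points is $N_k \lesssim \varepsilon_k^{-2-\zeta'}$, with a random but finite constant. The same argument applies inside $\mathop{\mathrm{proj}}([-T,T])$.

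Now suppose that for each $i$ some $x_n$ with $n \le \varepsilon_k^{-2-\zeta}$ lies in $B_{\varepsilon_k/4}(z_i) \cap B_1(o)$. Then every $y \in B_1(o)$ is within $\varepsilon_k/4$ of some $z_i$, hence within $\varepsilon_k/2$ of such an $x_n$, so \eqref{eq:net} holds at scale $\varepsilon_k/2$. Using $B_1(o) \subset \mathop{\mathrm{proj}}([-T,T])$, the set $B_{\varepsilon_k/4}(z_i) \cap B_1(o)$ contains a ball of radius $\varepsilon_k/8$ lying in $B_1(o)$. This holds because $\SCT$ is a geodesic $\BR$-tree: move from $z_i$ toward $o$ by distance $\varepsilon_k/8$ when $d(o, z_i) \ge \varepsilon_k/8$. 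Its mass is therefore at least $(\varepsilon_k/8)^{2+\zeta'}$. The conditional probability that no $x_n$ with $n \le \varepsilon_k^{-2-\zeta}$ hits it is at most
\[
\left(1 - \frac{(\varepsilon_k/8)^{2+\zeta'}}{2T}\right)^{\lfloor \varepsilon_k^{-2-\zeta} \rfloor} \le \exp\!\left(-c\, \varepsilon_k^{-(\zeta - \zeta')}\right).
\]
A union bound over the $N_k \lesssim \varepsilon_k^{-2-\zeta'}$ points gives a summable sequence in $k$.

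This is the main subtlety: the volume bound and the count $N_k$ hold only on random events, for $k$ larger than a random threshold. We handle it by conditioning on $(\SCT, \mathop{\mathrm{proj}})$. On the event $\{\varepsilon_\ast' > 2^{-k_0}\} \cap \{B_1(o) \subset \mathop{\mathrm{proj}}([-T,T])\}$, where the constant in $N_k$ is also fixed, the conditional probabilities above are summable over $k \ge k_0$. Conditional Borel--Cantelli then shows that almost surely only finitely many $k$ fail. Taking the union over $k_0$ removes the restriction.

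Finally, for $\varepsilon \in [\varepsilon_{k+1}, \varepsilon_k]$, the covering at scale $\varepsilon_k/2 \le \varepsilon$ uses indices $n \le \varepsilon_k^{-2-\zeta} \le \varepsilon^{-2-\zeta}$, which implies \eqref{eq:net}. The only genuine input is \Cref{lem:metric-ball-volume}. The rest is bookkeeping with the slack $\zeta - \zeta'$ and the factor-$2$ losses in radii, both of which are absorbed into $\varepsilon_\ast$.
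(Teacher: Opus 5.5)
Your proof is correct, but it takes a different route from the paper's.

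\emph{Your route.} At each dyadic scale you fix a maximal $\varepsilon_k/4$-separated net of $B_1(o; d_\SCT)$, chosen from the tree alone. You bound its cardinality $N_k$ by packing: disjoint balls of radius $\varepsilon_k/8$, each of mass at least $(\varepsilon_k/8)^{2+\zeta'}$, sit inside $B_2(o; d_\SCT)$, which has finite mass. You then take a union bound over the net of the probability that a ball of radius $\varepsilon_k/8$ inside $B_1(o; d_\SCT)$ is missed by the first $\varepsilon_k^{-2-\zeta}$ samples. When $d_\SCT(o,z_i) < \varepsilon_k/8$ your ``move toward $o$'' construction does not apply, and you should just use the ball centred at $o$; this case is trivial.

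\emph{The paper's route.} It uses no net and no count. If the covering $F_j$ at radius $2^{-j-1}$ fails, there is an empty ball $B_{2^{-j-2}}(x'; d_\SCT) \subset B_1(o; d_\SCT)$. Let $N = \lfloor 2^{j(2+\zeta)}\rfloor$. With conditional probability at least $2^{-(j+3)(2+\zeta/2)}/(2T)$, the next sample $x_{N+1}$ lands in $B_{2^{-j-3}}(x'; d_\SCT)$. This produces the event $G_j$: the ball of radius $2^{-j-3}$ around $x_{N+1}$ lies in $B_1(o; d_\SCT)$ and contains none of the first $N$ samples. Since every such ball has large mass, unconditionally $\BP[G_j]$ is stretched-exponentially small. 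Hence $\BP[F_j^c] \le \BP[G_j]/\BP[G_j \mid F_j^c]$.

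\emph{Comparison.} The paper effectively uses the random point $x_{N+1}$ as a one-point net and pays the polynomial factor through the denominator; you pay it through $N_k$. The paper's version needs only the lower volume bound once, with no packing estimate and no finiteness of $\SM_\SCT(B_2(o; d_\SCT))$. Yours is the more standard, transparent covering-number argument. In both, a polynomial loss is beaten by $\exp(-c\,\varepsilon^{-(\zeta-\zeta')})$, and the conditional Borel--Cantelli step is the same.

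\emph{Minor point.} $\SM_\SCT(\mathop{\mathrm{proj}}([-T,T])) = 2T$ holds almost surely, because non-leaf times have Lebesgue measure zero, rather than deterministically. Your argument works with any finite normalization, so this is harmless.
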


\begin{proof}
    For $\varepsilon_0 > 0$, write $E_{T,\varepsilon_0}$ for the event that $B_1(o; d_\SCT) \subset \mathop{\mathrm{proj}}([-T, T])$ and that
    \begin{equation*}
        \SM_\SCT(B_\varepsilon(z; d_\SCT)) \ge \varepsilon^{2 + \zeta/2}, \quad \forall z \in B_1(o; d_\SCT), \ \forall \varepsilon \in (0, \varepsilon_0]. 
    \end{equation*}
    By \Cref{lem:metric-ball-volume}, it suffices to show that, almost surely on the event $E_{T,\varepsilon_0}$, there exists $\varepsilon_\ast \in (0, 1)$ such that~\eqref{eq:net} holds. For $j \in \BN$, write $F_j$ for the event that
    \begin{equation*}
        B_1(o; d_\SCT) \subset \bigcup \left\{B_{2^{-j - 1}}(x_n; d_\SCT) : n \in [1, 2^{j(2 + \zeta)}]_\BZ, \ x_n \in B_1(o; d_\SCT)\right\}. 
    \end{equation*}
    We observe that if $F_j$ occurs for all $j \ge \lfloor\log_2(1/\varepsilon_\ast)\rfloor$, then~\eqref{eq:net} holds. Thus, it suffices to show that, almost surely on the event $E_{T,\varepsilon_0}$, there exists $j_\ast \in \BN$ such that $F_j$ occurs for all $j \ge j_\ast$. By definition, if $F_j$ does not occur, then there exists $x \in B_1(o; d_\SCT)$ such that $d_\SCT(x, x_n) \ge 2^{-j - 1}$ for all $n \in [1, 2^{j(2 + \zeta)}]_\BZ$ with $x_n \in B_1(o; d_\SCT)$, in which case one verifies immediately that there exists $x^\prime \in \SCT$ such that $B_{2^{-j - 2}}(x^\prime; d_\SCT) \subset B_1(o; d_\SCT)$ and that $x_n \notin B_{2^{-j - 2}}(x^\prime; d_\SCT)$ for all $n \in [1, 2^{j(2 + \zeta)}]_\BZ$ with $x_n \in B_1(o; d_\SCT)$ (hence for all $n \in [1, 2^{j(2 + \zeta)}]_\BZ$). Write $G_j$ for the event that $B_{2^{-j - 3}}(x_{\lfloor2^{j(2 + \zeta)}\rfloor + 1}; d_\SCT) \subset B_1(o; d_\SCT)$ and that $x_n \notin B_{2^{-j - 3}}(x_{\lfloor2^{j(2 + \zeta)}\rfloor + 1}; d_\SCT)$ for all $n \in [1, 2^{j(2 + \zeta)}]_\BZ$. Then
    \begin{equation*}
        \BP\lbrack G_j \mid E_{T,\varepsilon_0} \cap F_j^c\rbrack \ge \BP\!\left\lbrack x_{\lfloor2^{j(2 + \zeta)}\rfloor + 1} \in B_{2^{-j - 3}}(x^\prime; d_\SCT) \ \middle\vert \ E_{T,\varepsilon_0} \cap F_j^c\right\rbrack \ge 2^{-(j + 3)(2 + \zeta/2)}/(2T). 
    \end{equation*}
    On the other hand, 
    \begin{equation*}
        \BP\lbrack G_j \mid E_{T,\varepsilon_0}\rbrack \le (1 - 2^{-(j + 3)(2 + \zeta/2)}/(2T))^{\lfloor2^{j(2 + \zeta)}\rfloor} \le \exp(-c2^{j\zeta/2})
    \end{equation*}
    for some $c = c(\zeta, T) > 0$. Thus, we conclude that
    \begin{equation*}
        \BP\lbrack F_j^c \mid E_{T,\varepsilon_0}\rbrack \le \frac{\BP\lbrack G_j \mid E_{T,\varepsilon_0}\rbrack}{\BP\lbrack G_j \mid E_{T,\varepsilon_0} \cap F_j^c\rbrack} \le \exp(-c2^{j\zeta/2}) \cdot 2^{(j + 3)(2 + \zeta/2)} \cdot (2T). 
    \end{equation*}
    Combining this with the Borel--Cantelli lemma, we complete the proof.
\end{proof}

\subsection{Gromov hyperbolic geometry and hyperbolic fillings}

In the present subsection, we review standard material on Gromov hyperbolic geometry and the technique of hyperbolic fillings. This technique constructs a Gromov hyperbolic space with a prescribed Gromov boundary. By adapting the arguments of \cite{ConformalGauge}, we associate to each admissible weight function on the hyperbolic filling a metric space quasisymmetrically equivalent to the prescribed one. See \cite[Section~5]{ConfDimBSph} for more details.

\subsubsection{Gromov hyperbolic geometry}

Let $(X, d)$ be a proper and geodesic metric space. Then $(X, d)$ is referred to as \emph{Gromov hyperbolic} if there exists $\delta \ge 0$ such that $P_{x,y} \subset B_\delta(P_{x,z} \cup P_{y,z}; d)$ for all $x, y, z \in X$, where $P_{x,y}$ denotes any $d$-geodesic connecting $x$ and $y$.

Suppose $(X, d)$ is a Gromov hyperbolic space equipped with a basepoint $o \in X$. The \emph{Gromov product} of $x, y \in X$ with respect to $o$ is defined as $(x, y)_o \defeq \frac12(d(x, o) + d(y, o) - d(x, y))$. One can construct the \emph{Gromov boundary} $\partial_\infty(X, d)$ by
\begin{equation*}
    \partial_\infty(X, d) \defeq \{\{x_j\}_{j \in \BN} \subset X : (x_j, x_k)_o \to \infty \text{ as } j, k \to \infty\}/\sim, 
\end{equation*}
where two sequences $\{x_j\}_{j \in \BN}$ and $\{y_j\}_{j \in \BN}$ are equivalent if $(x_j, y_j)_o \to \infty$ as $j \to \infty$. This boundary is naturally identified with the set of $d$-geodesic rays $P \colon [0, \infty) \to X$, modulo an equivalence relation which identifies any two rays whose ranges are within a finite $d$-Hausdorff distance. We can extend the Gromov product to any pair of boundary points $a, b \in \partial_\infty(X, d)$ by setting
\begin{equation*}
    (a, b)_o \defeq \inf_{\{x_j\}_{j \in \BN}, \{y_j\}_{j \in \BN}} \liminf_{j \to \infty} (x_j, y_j)_o \in [0, \infty], 
\end{equation*}
minimizing over all valid representative sequences $\{x_j\}_{j \in \BN} \sim a$ and $\{y_j\}_{j \in \BN} \sim b$.

Finally, a metric $D$ on $\partial_\infty(X, d)$ is termed \emph{visual} with parameter $\varepsilon > 0$ if there is a constant $C \ge 1$ such that
\begin{equation*}
    C^{-1} \re^{-\varepsilon(a, b)_o} \le D(a, b) \le C \re^{-\varepsilon(a, b)_o}, \quad \forall a, b \in \partial_\infty(X, d). 
\end{equation*}
Such visual metrics are guaranteed to exist for all sufficiently small parameters $\varepsilon > 0$, and any two visual metrics belong to the same quasisymmetric equivalence class (i.e., the identity mapping on the Gromov boundary is a quasisymmetric homeomorphism between any pair of visual metrics).

\subsubsection{Hyperbolic fillings}\label{subsubsection:hyperbolic-filling}

Let $(X, D)$ be a compact metric space containing a point $x_0 \in X$ such that $X = B_1(x_0; D)$. 

Fix a sufficiently small parameter $\alpha \in (0, 1)$. Let $\{x_0\} = A_0 \subset A_1 \subset A_2 \subset \cdots$ be an increasing sequence of finite subsets of $X$ such that each $A_n$ is a maximal $\alpha^n$-separated subset, i.e., $D(x, y) \ge \alpha^n$ for all pairs of distinct $x, y \in A_n$ and $\{B_{\alpha^n}(x; D) : x \in A_n\}$ forms a covering of $X$. We shall write $V_n \defeq \{(x, n) : x \in A_n\}$ and $V \defeq \coprod_{n \ge 0} V_n$. For distinct $(x, m), (y, n) \in V$, we shall write $(x, m) \sim (y, n)$ if 
\begin{itemize}
    \item $m = n$ and $B_{4\alpha^m}(x; D) \cap B_{4\alpha^n}(y; D) \neq \emptyset$, or
    \item $\lvert m - n\rvert = 1$ and $B_{\alpha^m}(x; D) \cap B_{\alpha^n}(y; D) \neq \emptyset$. 
\end{itemize}

The metric graph associated with $(V, \sim)$ is Gromov hyperbolic with its Gromov boundary naturally identified with $X$. Moreover, $D$ induces a visual metric on $X$ with parameter $\log(1/\alpha)$. 

Fix an assignment $\sigma \colon X \times \BN \to \BR_{\ge 0}$. Suppose that the following condition is satisfied: 
\begin{equation}\label{eq:admissibility}
    \tag{$\bigstar$}
    \parbox{.85\linewidth}{For each $n \in \BN$ and $y, x_0, x_1, \ldots, x_N \in X$ such that $B_{4\alpha^n}(x_{j - 1}; D) \cap B_{4\alpha^n}(x_j; D) \neq \emptyset$ for all $j \in [1, N]_\BZ$, $B_{\alpha^{n - 1}}(y; D) \cap B_{4\alpha^n}(x_0; D) \neq \emptyset$, and $(X \setminus B_{2\alpha^{n - 1}}(y; D)) \cap B_{4\alpha^n}(x_N; D) \neq \emptyset$, we have $\sum_{j = 0}^N \sigma(x_j, n) \ge 1$.} 
\end{equation}

\begin{lemma}\label{lem:core}
    For each $\eta > 0$, there exists a metric $D_\sigma$ on $X$ and a constant $C > 0$ such that the identity mapping $\mathrm{id} \colon (X, D) \to (X, D_\sigma)$ is quasisymmetric and
    \begin{equation*}
        \diam(B_{\alpha^n}(x; D); D_\sigma) \le C \prod_{j = 1}^n \left(\eta + 2\sup\{\sigma(y, j) : y \in X \text{ with } D(x, y) \le 26\alpha^j\}\right), \quad \forall (x, n) \in V. 
    \end{equation*}
\end{lemma}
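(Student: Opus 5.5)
The plan is the Carrasco/Keith--Kleiner hyperbolic-filling construction, adapted as in \cite[Section~5]{ConfDimBSph}. First, modify $\sigma$ to a weight on vertices by setting, for $(x,n)\in V$,
\begin{equation*}
\rho(x,n) \defeq \eta + 2\sup\{\sigma(y,n) : y \in X,\ D(x,y)\le 26\alpha^n\},
\end{equation*}
and then truncate: put $\hat\rho \defeq \min(\rho, 1/2)$ or a comparable cap, so each factor is uniformly bounded below by $\eta$ and above by a constant less than one. The positive floor $\eta$ is what will give quasisymmetry, since it prevents collapse of balls. Second, define a length structure on the graph $(V,\sim)$. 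Give the edge between $(x,m)$ and $(y,n)$ length comparable to $\pi(x,m)\hat\rho(x,m)$, where
\begin{equation*}
\pi(x,n)\defeq\prod_{j=1}^n \hat\rho(x_j,j)
\end{equation*}
along a chain of ancestors $x_j$ of $x$, with $D(x_j,x)\lesssim\alpha^j$. Then let $D_\sigma(a,b)$ be the infimum of $\sum\pi$ over vertex paths in the filling converging to $a$ and $b$, i.e.\ the induced boundary path metric. Note that a parent-chain choice differing within distance $26\alpha^j$ changes each factor only by a bounded amount, which is why the sup over the $26\alpha^j$-neighbourhood appears.

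Upper bound. Any two points of $B_{\alpha^n}(x;D)$ can be joined by descending vertical paths from $(x,n)$. The resulting sum is a geometric-type series $\pi(x,n)\sum_k\prod(\text{factors}\le 1/2)\lesssim \pi(x,n)$, which gives the stated diameter bound with the product over $\rho$.

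Lower bound, which is the main obstacle. We must show that $D_\sigma$-distances are not much smaller than $\pi$, namely
\begin{equation*}
D_\sigma(a,b)\gtrsim \pi(x,n)\quad\text{when } D(a,b)\asymp\alpha^n .
\end{equation*}
Here the admissibility \eqref{eq:admissibility} enters. A path escaping from $B_{\alpha^{n-1}}(y)$ to the complement of $B_{2\alpha^{n-1}}(y)$ through level-$n$ chains has $\sum\sigma\ge1$. Hence horizontal shortcuts cost at least the parent weight, and the usual induction over levels, projecting an arbitrary filling path to a chain at a fixed level as in \cite{ConformalGauge}, gives the lower bound up to constants depending on $\eta$ and $\alpha$. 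A Harnack-type comparison $\pi(x,n)\asymp\pi(y,n)$ for adjacent vertices is needed, and it follows from $\eta\le\hat\rho\le1$ together with bounded degree of $\sim$ on a single level.

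Finally, quasisymmetry. We have $D_\sigma(a,b)\asymp\pi(a\wedge b)$, where $a\wedge b$ is the level-$n$ vertex with $\alpha^n\asymp D(a,b)$. Since each factor lies in $[\eta,1]$, ratios of $\pi$ across $k$ levels lie in $[\eta^k,1]$. This gives a power-type distortion function $\eta_{\mathrm{qs}}(t)=C\max(t^{a},t^{b})$, so $\mathrm{id}\colon(X,D)\to(X,D_\sigma)$ is quasisymmetric. Equivalently, $D_\sigma$ is a visual metric for a Gromov hyperbolic reweighting of the filling that is quasi-isometric to the original, and by Bonk--Schramm quasi-isometries induce quasisymmetries of boundaries.
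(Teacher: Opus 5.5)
There is a genuine gap. Your overall framework (a weighted hyperbolic filling as in \cite{ConformalGauge}, the boundary metric $\inf\sum\pi$ over filling paths, and quasisymmetry by comparison with the unweighted filling) is the one the paper imports from \cite[Section~5]{ConfDimBSph}. However, the step you treat as a one-line remark is false as stated, and it is where the real work lies.

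You claim that $\pi(x,n)\asymp\pi(y,n)$ for adjacent $(x,n)\sim(y,n)$ follows from $\eta\le\hat\rho\le1$ and bounded degree. It does not.
\begin{itemize}
\item The designated ancestor chains $(x_j,j)$ and $(y_j,j)$ of two neighbours are in general distinct (neighbouring) vertices at every level. Nothing prevents $\hat\rho(x_j,j)/\hat\rho(y_j,j)\ge c>1$ for all $j$, so $\pi(x,n)/\pi(y,n)$ can be of order $c^n$.
\item Taking a supremum of $\sigma$ over a $26\alpha^j$-ball does not help: it gives only one-sided comparisons, and only against a larger ball.
\item Your own remark that changing the chain ``changes each factor only by a bounded amount'' exposes the problem: a bounded change per factor is an exponential change of the product.
\item Bounded degree is not available anyway, since $X$ is not assumed doubling, and the Brownian tree is not doubling.
\end{itemize}

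Without this comparability, each of your later steps fails.
\begin{itemize}
\item Upper bound: the vertices on a descending path from $(x,n)$ have level-$n$ ancestors that are neighbours of $(x,n)$, not $(x,n)$ itself. So the bound you get is not in terms of $\pi(x,n)$.
\item Lower bound: you cannot compare weights along a horizontal chain with the parent weight.
\item Quasisymmetry: ``$\pi(a\wedge b)$'' is not defined up to constants, so the distortion estimate collapses.
\item Bonk--Schramm step: a reweighted filling in which $v$ sits at height about $\log(1/\pi(v))$ is not quasi-isometric to the original filling if $\pi$ jumps by unbounded factors between neighbours.
\end{itemize}

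The missing ingredient is the smoothing step the paper takes from \cite[Lemma~5.3]{ConfDimBSph}. Let $\nu(x,n)$ be twice the supremum of $\sigma(\cdot,n)$ over two-step neighbours, and $\mu=\eta\vee\nu\wedge(1-\eta)$. One then builds a new weight $\varrho$ with $\mu(x,n)\le\varrho(x,n)\le\sup\{\mu(y,n):(y,n)\cong(x,n)\}$, constructed so that the reweighted filling is quasi-isometric to the original one. Only then does \cite[Corollary~5.6]{ConfDimBSph} give a quasisymmetric $D_\sigma$ with $\mathrm{diam}(B_{\alpha^n}(x;D);D_\sigma)\le C\prod_{j=1}^n\varrho(g(x,n)_j)$.

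The radius $26\alpha^j$ in the statement is exactly the bookkeeping cost of this. It consists of three neighbour steps of length less than $8\alpha^j$ each (two inside $\nu$, one from the smoothing), plus $D(x_j,x)\le 2\alpha^j$ for the ancestor. You need to supply such a construction, and check that its enlarged supremum still fits inside $26\alpha^j$, rather than asserting the Harnack comparison.
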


\begin{proof}
    We use the notation of \cite[Section~5]{ConfDimBSph}. Recall that for $(x, n) \in V$,
    \begin{itemize}
        \item $\nu(x, n) = 2 \sup\{\sigma(x^\pprime, n) : (x^\prime, n), (x^\pprime, n) \in V \text{ with } (x, n) \cong (x^\prime, n) \cong (x^\pprime, n)\}$ (where ``$\cong$'' denotes either ``$\sim$'' or ``$=$''); 
        \item $\mu(x, n) = \eta \vee \nu(x, n) \wedge (1 - \eta)$; 
        \item $g(x, n)_{n - 1} = (y, n - 1)$, where $y \in A_{n - 1}$ is such that $D(x, y) \le D(x, y^\prime)$ for all $y^\prime \in A_n \setminus \{y\}$ (hence that $D(x, y) \le \alpha^{n - 1}$); 
        \item $g(x, n)_j = g(\cdots g(g(x, n)_{n - 1})_{n - 2} \cdots)_j$;
        \item $\varrho \colon V \setminus \{(x_0, 0)\} \to [\eta, 1 - \eta]$ is an assignment such that $\mu(x, n) \le \varrho(x, n) \le \sup\{\sigma(x^\prime, n) : (x^\prime, n) \in V  \text{ with } (x, n) \cong (x^\prime, n)\}$ (cf.~\cite[Lemma~5.3, (i)]{ConfDimBSph}); 
        \item $\pi(x, n) = \prod_{j = 1}^n \varrho(g(x, n)_j)$. 
    \end{itemize}
    
    By the discussion of \cite[Section~5]{ConfDimBSph} (especially \cite[Corollary~5.6]{ConfDimBSph}), there is a natural quasisymmetric homeomorphism $(X, D) \to (\partial_\infty(Z, d_\varrho), d_1)$ (induced by a \emph{quasi-isometry} from the metric graph associated with $(V, \sim)$ to another Gromov hyperbolic space $(Z, d_\varrho)$) such that if we write $D_\sigma$ for the metric on $X$ induced by $d_1$, then there exists a constant $C > 0$ such that 
    \begin{equation*}
        \diam(B_{\alpha^n}(x; D); D_\sigma) \le C\pi(x, n) = C\prod_{j = 1}^n \varrho(g(x, n)_j), \quad \forall (x, n) \in V. 
    \end{equation*}
    On the other hand, it follows from the preceding discussion that
    \begin{align*}
        \varrho(g(x, n)_j) &\le \sup\{\mu(y, j) : (y, j) \in V \text{ with } g(x, n)_j \cong (y, j)\} \\
        &\le \eta + \sup\{\nu(y, j) : (y, j) \in V \text{ with } g(x, n)_j \cong (y, j)\} \\
        &\le \eta + 2\sup\{\sigma(y^\pprime, j) : (y, j), (y^\prime, j), (y^\pprime, j) \in V \text{ with } g(x, n)_j \cong (y, j) \cong (y^\prime, j) \cong (y^\pprime, j)\} \\
        &\le \eta + 2\sup\{\sigma(y, j) : y \in X \text{ with } D(x_j, y) \le 24\alpha^j\} \quad \text{(where } g(x, n)_j = (x_j, j)\text{)} \\
        &\le \eta + 2\sup\{\sigma(y, j) : y \in X \text{ with } D(x, y) \le 26\alpha^j\} \quad \text{(where } D(x_j, x) \le 2\alpha^j\text{)}.
    \end{align*}
    This completes the proof. 
\end{proof}

In particular, in order to show that the conformal dimension of $(X, D)$ is at most $p$ for some $p > 0$, it suffices to construct maximal $\alpha^n$-separated subsets $\{x_0\} = A_0 \subset A_1 \subset A_2 \subset \cdots \subset X$ and an admissible (i.e., satisfying~\eqref{eq:admissibility}) weight function $\sigma \colon X \times \BN \to \BR_{\ge 0}$ such that 
\begin{equation*}
    \sum_{(x, n) \in V_n} \prod_{j = 1}^n \left(\eta + 2\sup\{\sigma(y, j) : y \in X \text{ with } D(x, y) \le 26\alpha^j\}\right)^p \to 0 \quad \text{as } n \to \infty. 
\end{equation*}

\section{Constructing an admissible weight}\label{section:weight}

Let $(\SCT, d_\SCT, \SM_\SCT; o)$ be an infinite Brownian tree. In the present section, we construct a weight function $\sigma \colon \SCT \times \BN \to \BR_{\ge 0}$ that satisfies~\eqref{eq:admissibility}. Fix a sufficiently small parameter $\alpha \in (0, 1)$ to be chosen later. 

\begin{definition}\label{def:weight}
    Let $x \in \SCT$ and $n \in \BN$. Then:
    \begin{itemize}
        \item We shall write $E(x, n)$ (resp.~$\widetilde E(x, n)$) for the event that there exists a subtree branching off $\llbracket x, \SR_{4\alpha^n}(x)\rrbracket$ (resp.~$\llbracket x, \SR_{32\alpha^n}(x)\rrbracket$) with diameter at least $\alpha^{n - 1}/4$. (Recall from \Cref{subsubsection:infinite-Brownian-tree} that $\SR_{4\alpha^n}(x)$ denotes the unique point on the geodesic ray $\llbracket x, \infty\rrparenthesis$ such that $d_\SCT(x, \SR_{4\alpha^n}(x)) = 4\alpha^n$ and that a subtree branching off $\llbracket x, \SR_{4\alpha^n}(x)\rrbracket$ is the closure of a bounded connected component of $\SCT \setminus \llbracket x, \SR_{4\alpha^n}(x)\rrbracket$.)
        \item We shall write $\sigma(x, n) \defeq 32\alpha \cdot \mathbf 1_{E(x, n)}$. 
    \end{itemize}
\end{definition}

The point of introducing the event $\widetilde E(x, n)$ is to obtain a version of $E(x, n)$ that is robust under small perturbations of the center $x$.

\begin{lemma}
\label{lem:tilde-E-2-E}
    Let $x \in \SCT$ and $n \in \BN$. If $E(x^\prime, n)$ occurs for some $x^\prime \in B_{28\alpha^n}(x; d_\SCT)$, then $\widetilde E(x, n)$ occurs. 
\end{lemma}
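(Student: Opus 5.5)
The plan is to compare the two geodesic segments $\llbracket x', \SR_{4\alpha^n}(x')\rrbracket$ and $\llbracket x, \SR_{32\alpha^n}(x)\rrbracket$ through the branch point $z \defeq x \curlywedge x'$, and then transfer the large subtree witnessing $E(x',n)$ to a subtree branching off the longer segment.

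\textbf{Step 1 (geometry of the rays).} Since $\llbracket x', \infty\rrparenthesis = \llbracket x', z\rrbracket \cup \llbracket z, \infty\rrparenthesis$ and $\llbracket z,\infty\rrparenthesis \subset \llbracket x,\infty\rrparenthesis$, and since $d_\SCT(x,z)+d_\SCT(z,x') = d_\SCT(x,x') < 28\alpha^n$, we get
\begin{equation*}
\llbracket x', \SR_{4\alpha^n}(x')\rrbracket \subset \llbracket x', z\rrbracket \cup \llbracket z, \SR_{d_\SCT(x,z)+4\alpha^n}(x)\rrbracket \subset \llbracket x', z\rrbracket \cup \llbracket x, \SR_{32\alpha^n}(x)\rrbracket .
\end{equation*}
In particular, the only part of the short segment that can lie off the long segment is the piece $\llbracket x', z\rrparenthesis$, of length less than $28\alpha^n$.

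\textbf{Step 2 (transferring the subtree).} Let $S$ be the subtree branching off $\llbracket x', \SR_{4\alpha^n}(x')\rrbracket$ with $\diam(S; d_\SCT) \ge \alpha^{n-1}/4$. Write $p$ for its attachment point and $C = S\setminus\{p\}$ for the corresponding bounded component. I will split into two cases.
\begin{itemize}
\item[(a)] Suppose $C$ is disjoint from $\llbracket x, \SR_{32\alpha^n}(x)\rrbracket$. Then $C$ lies in a single component $C'$ of $\SCT\setminus\llbracket x,\SR_{32\alpha^n}(x)\rrbracket$. This $C'$ is bounded: the only unbounded component is the one containing $\rrbracket \SR_{32\alpha^n}(x), \infty\rrparenthesis$, and that ray lies in the short segment $\cup$ the unbounded component of $\SCT\setminus \llbracket x',\SR_{4\alpha^n}(x')\rrbracket$, so it does not meet $C$. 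Hence $\overline{C'} \supset S$ is a subtree branching off $\llbracket x,\SR_{32\alpha^n}(x)\rrbracket$ of diameter at least $\alpha^{n-1}/4$, and $\widetilde E(x,n)$ occurs.
\item[(b)] Suppose $C$ meets $\llbracket x,\SR_{32\alpha^n}(x)\rrbracket$. By Step 1, this forces $p \in \llbracket x', z\rrparenthesis$ and $C \ni x$, so $C \cap \llbracket x, \SR_{32\alpha^n}(x)\rrbracket \subset \llbracket x, z\rrbracket$. Then $S$ decomposes as this short arc (length $<28\alpha^n$), the piece on the $x'$-side of $p$, and the subtrees branching off $\llbracket x,z\rrbracket$ that are contained in $C$.
\end{itemize}

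\textbf{Step 3 (the main obstacle).} The delicate case is (b). There I would pick $u,v \in S$ with $d_\SCT(u,v)$ close to $\diam(S; d_\SCT)$ and track where their geodesic meets $\llbracket x,z\rrbracket$. If $u$ and $v$ lie in the same subtree branching off $\llbracket x,z\rrbracket$, or in the tree hanging at $p$, we are done directly. The problematic configuration is when $u$ and $v$ lie in two different subtrees branching off $\llbracket x, z\rrbracket$. Then only a bound of order $\tfrac12(\alpha^{n-1}/4 - 28\alpha^n)$ on one of the subtrees is immediate, which is not enough.

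I would first try to rule out or absorb this configuration. The idea is to use that $x$ is contained in the component $C$, so the subtrees in question point ``away from $\infty$''. I would then check whether the definition of ``subtree branching off'' intended in \Cref{def:weight} allows one to take the whole of $C$ together with the part $\llbracket x', p\rrbracket$, or whether the constants $4$ versus $32$ leave room to count the union of the branches off $\llbracket x,z\rrbracket$ as a single subtree.

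If this fails, the natural fallback is to weaken the diameter threshold in $\widetilde E(x,n)$ to $\alpha^{n-1}/8 - 14\alpha^n$. Since $\widetilde E$ is only used as a robust upper bound on where $\sigma$ can be nonzero, such a weakening should be harmless downstream.
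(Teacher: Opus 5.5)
\paragraph{Case (a).} Your case (a) is correct. It is essentially the paper's argument in unified form. The paper splits according to whether the root $o'$ of the large subtree $\SCU'$ lies on $\llbracket x,\infty\rrparenthesis$ or on $\llbracket x', x\curlywedge x'\rrparenthesis$. In the first case $\SCU'$ branches off $\llbracket x,\infty\rrparenthesis$ within distance $32\alpha^n$ of $x$. In the second it sits inside the subtree hanging off $\llbracket x,\infty\rrparenthesis$ at $x\curlywedge x'$. One point in (a) needs fixing: ``the ray does not meet $C$'' does not by itself show that $C'$ is bounded. Say instead that any path from $C$ to $\rrbracket\SR_{32\alpha^n}(x),\infty\rrparenthesis$ passes through $p$, and then, if $p\notin\llbracket x,\SR_{32\alpha^n}(x)\rrbracket$, through $z$.

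\paragraph{The gap in case (b).} As written, your proposal does not prove the lemma, because Step~3 is left open. That gap cannot be closed: your case (b) contains genuine counterexamples. The paper's proof skips exactly this case. Its claim that $\SCU'$ is a subtree branching off $\llbracket x,\infty\rrparenthesis$ whenever $o'\in\llbracket x,\infty\rrparenthesis$ fails when $o'=x\curlywedge x'$ and $x\in\SCU'$.

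Here is a concrete counterexample. Let $x$ be a branch point whose two components of $\SCT\setminus\{x\}$ not meeting $\llbracket x,\infty\rrparenthesis$ have closures $T_1,T_2$. Set $h_i\defeq\sup_{v\in T_i}d_\SCT(x,v)$ and assume $\diam(T_i;d_\SCT)<\alpha^{n-1}/4\le h_1+h_2$. Assume also that no other subtree branching off $\llbracket x,\SR_{32\alpha^n}(x)\rrbracket$ has diameter $\ge\alpha^{n-1}/4$. Take $x'=\SR_\epsilon(x)$ with $0<\epsilon<4\alpha^n$. The component of $\SCT\setminus\llbracket x',\SR_{4\alpha^n}(x')\rrbracket$ containing $x$ is bounded and contains $T_1\cup T_2$. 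So $E(x',n)$ occurs while $\widetilde E(x,n)$ does not.

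Nothing in the Brownian tree excludes this. The root of each $T_i$ is a leaf of $T_i$, so $\diam(T_i;d_\SCT)<2h_i$. Such configurations occur with positive probability, so no argument can ``rule out or absorb'' them.

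Your list of easy cases is also too optimistic. If one extremal point lies on the arc $\llbracket x,z\rrbracket$ (e.g.\ $z$ itself), you again only get $\alpha^{n-1}/4-28\alpha^n$.

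\paragraph{The fallback.} Your fallback is the right repair, once the description of case (b) is corrected:
\begin{itemize}
\item The attachment point is $p=z=x\curlywedge x'$, not a point of $\llbracket x',z\rrparenthesis$.
\item There is no piece on the $x'$-side.
\item $S=\llbracket x,z\rrbracket\cup\bigcup_i T_i$, where the $T_i$ are the subtrees branching off $\llbracket x,z\rrbracket$ at points of $\llbracket x,z\rrparenthesis$. Each $T_i$ is also a subtree branching off $\llbracket x,\SR_{32\alpha^n}(x)\rrbracket$.
\end{itemize}
Let $H$ be the supremum of the heights of the $T_i$. Projecting onto $\llbracket x,z\rrbracket$ gives $\alpha^{n-1}/4\le\diam(S;d_\SCT)\le 2H+d_\SCT(x,z)<2H+28\alpha^n$. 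Hence some $T_i$ has diameter $>\alpha^{n-1}/8-14\alpha^n$, which handles every configuration at once.

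Define $\widetilde E(x,n)$ with this threshold (or with $\alpha^{n-1}/16$ when $\alpha\le 1/224$). Then Lemma~\ref{lem:rho-upper-bound} follows as before. In Lemma~\ref{lem:pi-expectation}, the bound~\eqref{eq:pi-expectation-proof} changes only by a bounded factor, and the monotonicity in $j$ used there is unchanged. So only the universal constant $C$ changes, and the main theorem is unaffected.
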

\begin{proof}
    Suppose that $x^\prime \in B_{28\alpha^n}(x; d_\SCT)$ and $E(x^\prime, n)$ occurs. Then there exists a subtree $(\SCU^\prime, o^\prime)$ branching off $\llbracket x^\prime, \SR_{4\alpha^n}(x^\prime)\rrbracket$ with diameter at least $\alpha^{n - 1}/4$. If $o^\prime \in \llbracket x, \infty\rrparenthesis$, then $(\SCU^\prime, o^\prime)$ is a subtree branching off $\llbracket x, \infty\rrparenthesis$ such that $d_\SCT(x, o^\prime) \le d_\SCT(x, x^\prime) + d_\SCT(x^\prime, o^\prime) \le 28\alpha^n + 4\alpha^n = 32\alpha^n$, hence a subtree branching off $\llbracket x, \SR_{32\alpha^n}(x)\rrbracket$. Otherwise, we have $o^\prime \in \llbracket x^\prime, x \curlywedge x^\prime\rrbracket$, in which case the subtree branching off $\llbracket x, \infty\rrparenthesis$ at $x \curlywedge x^\prime$ contains $\SCU^\prime$ (hence has diameter at least $\alpha^{n - 1}/4$) and satisfies $d_\SCT(x, x \curlywedge x^\prime) \le d_\SCT(x, x^\prime) \le 28\alpha^n \le 32\alpha^n$. This completes the proof. 
\end{proof}

\begin{lemma}
\label{lem:admissibility}
    Condition~\eqref{eq:admissibility} is satisfied: Let $n \in \BN$ and $y, x_0, x_1, \ldots, x_N \in \SCT$ such that
    \begin{itemize}
        \item $B_{\alpha^{n - 1}}(y; d_\SCT) \cap B_{4\alpha^n}(x_0; d_\SCT) \neq \emptyset$,
        \item $B_{4\alpha^n}(x_{j - 1}; d_\SCT) \cap B_{4\alpha^n}(x_j; d_\SCT) \neq \emptyset$ for all $j \in [1, N]_\BZ$, and 
        \item $(\SCT \setminus B_{2\alpha^{n - 1}}(y; d_\SCT)) \cap B_{4\alpha^n}(x_N; d_\SCT) \neq \emptyset$. 
    \end{itemize}
    Then $\sum_{j = 0}^N \sigma(x_j, n) \ge 1$. 
\end{lemma}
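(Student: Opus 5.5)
The plan is to translate \eqref{eq:admissibility} into a counting statement. Since $\sigma(x_j,n)=32\alpha\cdot\mathbf 1_{E(x_j,n)}$, it suffices to show that $E(x_j,n)$ occurs for at least $1/(32\alpha)$ distinct indices $j\in[0,N]_\BZ$. I will find these indices along a long ancestral segment of the tree.

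First I extract the geometry from the hypotheses. Pick $a_0\in B_{\alpha^{n-1}}(y)\cap B_{4\alpha^n}(x_0)$ and $a_N\in B_{4\alpha^n}(x_N)\setminus B_{2\alpha^{n-1}}(y)$. The triangle inequality gives $d_\SCT(x_0,x_N)\ge \alpha^{n-1}-8\alpha^n$, and also $d_\SCT(x_{j-1},x_j)<8\alpha^n$. Since $\SCT$ is an $\BR$-tree, the geodesic $\llbracket x_0,x_N\rrbracket$ is contained in $\bigcup_j\llbracket x_{j-1},x_j\rrbracket$. Hence every point $z\in\llbracket x_0,x_N\rrbracket$ lies on some $\llbracket x_{j-1},x_j\rrbracket$, and both endpoints of that segment are within $8\alpha^n$ of $z$.

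Next I use the structure toward infinity. Write $w\defeq x_0\curlywedge x_N$, so that $\llbracket x_0,x_N\rrbracket=\llbracket x_0,w\rrbracket\cup\llbracket w,x_N\rrbracket$. One of these two pieces, say $\llbracket x_0,w\rrbracket$ (the other case is symmetric with $x_N$ in place of $x_0$), has length $L\ge(\alpha^{n-1}-8\alpha^n)/2$. This piece is an ancestral segment: $w=\SR_L(x_0)$. Consider a point $z=\SR_t(x_0)$ with $t\ge\alpha^{n-1}/4+8\alpha^n$, and choose $j$ with $z\in\llbracket x_{j-1},x_j\rrbracket$. The claim is that one of $x_{j-1},x_j$, call it $x$, satisfies $E(x,n)$. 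The reason is as follows. At least one of the two endpoints is not a descendant of $z$ lying on the $x_0$-side, since the geodesic between them passes through $z$ and at most one of its two sides points toward $x_0$. For such an $x$, the ray $\llbracket x,\infty\rrparenthesis$ meets $\llbracket x_0,w\rrbracket$ within distance $8\alpha^n$ of $x$. The component containing $x_0$ then branches off at a point near $z$ and has diameter at least $d_\SCT(z,x_0)-8\alpha^n\ge\alpha^{n-1}/4$.

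I expect this branching point to be the main obstacle, because $E(x,n)$ only allows branching within distance $4\alpha^n$ along the ray, whereas the crude bound above gives $8\alpha^n$. I will first try to fix this by choosing $x$ more carefully, namely the endpoint whose projection onto $\llbracket x_0,w\rrbracket$ is closest to $z$, and by bounding $d_\SCT(x,\llbracket x_0,w\rrbracket)$. If that fails, I will fall back on \Cref{lem:tilde-E-2-E}-type slack and reprove the count with $\widetilde E$ as an intermediate.

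Finally I count. Take points $z_k=\SR_{t_k}(x_0)$ spaced $16\alpha^n$ apart, for $t_k\in[\alpha^{n-1}/4+8\alpha^n,L]$. Because of this spacing, the chosen indices $j$ are pairwise distinct. The number of points is roughly $(L-\alpha^{n-1}/4)/(16\alpha^n)\approx 1/(64\alpha)$. This misses the required $1/(32\alpha)$ by a factor of $2$, so I will recover the constant in one of two ways. One is to avoid halving $L$, by running the argument on both pieces $\llbracket x_0,w\rrbracket$ and $\llbracket w,x_N\rrbracket$ simultaneously with disjoint index sets. The other is to reduce the spacing to $8\alpha^n$ and argue that only consecutive points can share an endpoint. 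Either refinement should give at least $1/(32\alpha)$ indices $j$ with $E(x_j,n)$, and hence $\sum_{j=0}^N\sigma(x_j,n)\ge1$.
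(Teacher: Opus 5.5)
Your route (covering $\llbracket x_0,x_N\rrbracket$ by the arcs $\llbracket x_{j-1},x_j\rrbracket$ and using the ancestral rays) differs from the paper's and can be made to work. However, the step you flag as the main obstacle is a genuine gap, and neither of your proposed remedies closes it.

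The projection rule fails. Take $x_{j-1}\in\llbracket x_0,z\rrbracket$ with $d_\SCT(x_{j-1},z)=\epsilon$. Take $x_j$ to be the tip of a side branch hanging off at $z$, of diameter less than $\alpha^{n-1}/4$, with $d_\SCT(x_j,z)=8\alpha^n-2\epsilon$. Then $x_j$ projects to $z$ itself and is selected; it is also your initial choice, since it lies off the $x_0$-side. But $x_j\curlywedge x_0=z$ is at distance about $8\alpha^n$ from $x_j$. Every subtree branching off $\llbracket x_j,\SR_{4\alpha^n}(x_j)\rrbracket$ lies inside that small side branch, so $E(x_j,n)$ fails. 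Meanwhile $E(x_{j-1},n)$ holds, because $x_0$ hangs off at $x_{j-1}$ itself.

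The fallback through $\widetilde E$ cannot prove this statement. The weight $\sigma$ is defined via $E$, and \Cref{lem:tilde-E-2-E} only deduces $\widetilde E$ from $E$, never the converse.

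The missing observation is to take the endpoint $x\in\{x_{j-1},x_j\}$ nearer to $z$, so that $d_\SCT(x,z)<4\alpha^n$, with no restriction on which side of $z$ it lies.
\begin{itemize}
\item Since $x\curlywedge z\in\llbracket x,z\rrbracket\cap\llbracket z,\infty\rrparenthesis$ and $\llbracket z,\infty\rrparenthesis\subset\llbracket x_0,\infty\rrparenthesis$, the first point $b\defeq x\curlywedge x_0$ of $\llbracket x,\infty\rrparenthesis$ on $\llbracket x_0,\infty\rrparenthesis$ satisfies $d_\SCT(x,b)\le d_\SCT(x,z)<4\alpha^n$.
\item If $b$ lies strictly below $z$, then $z\in\llbracket x,\infty\rrparenthesis$ and $b\in\llbracket x,z\rrbracket$.
\item Either way $d_\SCT(x_0,b)>t-4\alpha^n$. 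So the subtree branching off $\llbracket x,\SR_{4\alpha^n}(x)\rrbracket$ at $b$ that contains $x_0$ witnesses $E(x,n)$ once $t\ge c\defeq\alpha^{n-1}/4+4\alpha^n$.
\end{itemize}

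This also repairs your count, which as written falls short. With proximity $8\alpha^n$ you need spacing $16\alpha^n$. Even using both pieces, you then get only about $(\alpha^{n-1}/2)/(16\alpha^n)=1/(32\alpha)$ points minus $O(1)$. The ``only consecutive points share an endpoint'' variant loses the same factor $2$.

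With proximity below $4\alpha^n$, points of $\llbracket x_0,x_N\rrbracket$ at mutual distance at least $8\alpha^n$ give distinct indices. Place them at arclength $s\in[c,d_\SCT(x_0,x_N)-c]$ from $x_0$. Each lies on the ray of $x_0$ (if in $\llbracket x_0,w\rrbracket$) or on the ray of $x_N$, at distance at least $c$ from that endpoint. This gives at least $(\alpha^{n-1}/2-16\alpha^n)/(8\alpha^n)=1/(16\alpha)-2$ indices, which is enough for small $\alpha$.

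For comparison, the paper avoids the ray bookkeeping entirely. It shortcuts the chain so that non-adjacent balls are disjoint. Then each ball meeting the annulus $A_{4\alpha^{n-1}/3,5\alpha^{n-1}/3}(y;d_\SCT)$ separates two connected unions of balls of diameter at least $\alpha^{n-1}/4$. One of these must lie in a subtree branching off $\llbracket x_j,\SR_{4\alpha^n}(x_j)\rrbracket$, and there are at least $1/(24\alpha)$ such balls.
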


\begin{proof}
    By possibly replacing the path $B_{4\alpha^n}(x_0; d_\SCT), B_{4\alpha^n}(x_1; d_\SCT), \ldots, B_{4\alpha^n}(x_N; d_\SCT)$ with a subpath, we may assume without loss of generality that 
    \begin{equation*}
        \left(\bigcup_{k = 0}^{j - 1} B_{4\alpha^n}(x_k; d_\SCT)\right) \cap \left(\bigcup_{k = j + 1}^N B_{4\alpha^n}(x_k; d_\SCT)\right) = \emptyset, \quad \forall j \in [1, N - 1]_\BZ. 
    \end{equation*}
    Note that for each $j \in [0, N]_\BZ$ such that $B_{4\alpha^n}(x_j; d_\SCT) \cap A_{4\alpha^{n - 1}/3,5\alpha^{n - 1}/3}(y; d_\SCT) \neq \emptyset$, the event $E(x, n)$ occurs. Indeed, 
    \begin{equation*}
        \bigcup_{k = 0}^{j - 2} B_{4\alpha^n}(x_k; d_\SCT) \quad \text{and} \quad \bigcup_{k = j + 2}^N B_{4\alpha^n}(x_k; d_\SCT)
    \end{equation*}
    are contained in distinct connected components of $\SCT \setminus B_{4\alpha^n}(x_j; d_\SCT)$ (hence at least one of them is contained in a subtree branching off $\llbracket x_j, \SR_{4\alpha^n}(x_j)\rrbracket$), and both of them are connected and have diameter at least $\alpha^{n - 1}/4$.

    Since the number of $j \in [0, N]_\BZ$ such that $B_{4\alpha^n}(x_j; d_\SCT) \cap A_{4\alpha^{n - 1}/3,5\alpha^{n - 1}/3}(y; d_\SCT) \neq \emptyset$ is at least $(\alpha^{n - 1}/3)/(8\alpha^n) = 1/(24\alpha)$, we conclude that $\sum_{j = 0}^N \sigma(x_j, n) \ge 32\alpha/(24\alpha) \ge 1$. This completes the proof. 
\end{proof}

\begin{definition}\label{def:rho-sigma-pi}
    Let $x \in \SCT$ and $n \in \BN$. Then we shall write 
    \begin{itemize}
        \item $\varrho(x, n) \defeq \eta + 2\sup\{\sigma(y, n) : y \in \SCT \text{ with } d_\SCT(x, y) \le 26\alpha^n\}$ (cf.~\Cref{lem:core});
        \item $\varsigma(x, n) \defeq \eta + 64\alpha \cdot \mathbf 1_{\widetilde E(x, n)}$; 
        \item $\varpi(x, n) \defeq \prod_{j = 1}^n \varsigma(x, j)$. 
    \end{itemize}
\end{definition}

\begin{lemma}\label{lem:rho-upper-bound}
    Let $x \in \SCT$ and $n \in \BN$. Then $\varrho(x, n) \le \varsigma(x^\prime, n)$ (hence $\prod_{j = 1}^n \varrho(x, j) \le \varpi(x^\prime, n)$) for all $x^\prime \in B_{2\alpha^n}(x; d_\SCT)$.
\end{lemma}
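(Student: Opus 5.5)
The inequality is read off from the definitions, with \Cref{lem:tilde-E-2-E} doing the only real work. By \Cref{def:weight}, $\sigma(y, n) \in \{0, 32\alpha\}$ for every $y$. Hence $\varrho(x, n) = \eta + 2\sup\{\sigma(y,n) : d_\SCT(x,y) \le 26\alpha^n\}$ takes only the value $\eta$ or $\eta + 64\alpha$. The latter occurs exactly when some $y$ with $d_\SCT(x, y) \le 26\alpha^n$ has $E(y, n)$ occurring. Likewise, $\varsigma(x^\prime, n)$ takes the value $\eta$ or $\eta + 64\alpha$, according to whether $\widetilde E(x^\prime, n)$ fails or occurs.

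We split into two cases.
\begin{itemize}
    \item If no $y$ with $d_\SCT(x,y)\le 26\alpha^n$ has $E(y,n)$, then $\varrho(x,n)=\eta\le\varsigma(x^\prime,n)$ trivially.
    \item Otherwise, pick such a $y$ with $E(y,n)$. For $x^\prime\in B_{2\alpha^n}(x;d_\SCT)$, the triangle inequality gives $d_\SCT(x^\prime,y) < 2\alpha^n + 26\alpha^n = 28\alpha^n$, so $y\in B_{28\alpha^n}(x^\prime;d_\SCT)$. Applying \Cref{lem:tilde-E-2-E} with $x^\prime$ as the center and $y$ as the perturbed point, $\widetilde E(x^\prime,n)$ occurs. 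Therefore $\varsigma(x^\prime,n)=\eta+64\alpha=\varrho(x,n)$.
\end{itemize}

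For the product statement, observe that $x^\prime \in B_{2\alpha^n}(x; d_\SCT)$ implies $x^\prime \in B_{2\alpha^j}(x; d_\SCT)$ for every $j \le n$, because $\alpha<1$ makes $\alpha^j \ge \alpha^n$. The first part then gives $\varrho(x,j)\le\varsigma(x^\prime,j)$ for each $j\in[1,n]_\BZ$. All factors are nonnegative, so multiplying these inequalities yields $\prod_{j=1}^n\varrho(x,j)\le\prod_{j=1}^n\varsigma(x^\prime,j)=\varpi(x^\prime,n)$.

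The only points needing care are the strict versus non-strict inequality in the ball containment, which holds since $B$ is an open ball, and the monotonicity of the radius in $j$. The argument has no substantive obstacle.
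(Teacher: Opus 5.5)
Your proof is correct and follows the paper's route exactly. The paper simply says the lemma follows from the definitions together with \Cref{lem:tilde-E-2-E}, and you have written out that argument in full: the two-valuedness of $\varrho$ and $\varsigma$, the triangle inequality giving $d_\SCT(x^\prime, y) < 28\alpha^n$, and the inclusion $B_{2\alpha^n}(x; d_\SCT) \subset B_{2\alpha^j}(x; d_\SCT)$ for $j \le n$ to pass to the product.
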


\begin{proof}
    This follows immediately from the various definitions involved, together with \Cref{lem:tilde-E-2-E}.
\end{proof}

\begin{lemma}\label{lem:pi-expectation}
    Set $\eta \defeq \alpha^{100}$. Then there is a universal constant $C > 0$ such that for each $p \in (1, 2)$, 
    \begin{equation*}
        \BE\lbrack\varpi(o, n)^p\rbrack \le C^n \alpha^{(p + 1)n}, \quad \forall \alpha \in (0, 1), \ \forall n \in \BN. 
    \end{equation*}
\end{lemma}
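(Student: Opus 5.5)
The plan is to expand the product in $\varpi(o, n)^p$ and reduce everything to a bound on the probability that many of the events $\widetilde E(o, j)$ occur simultaneously. That probability is then estimated using the spine decomposition of the infinite Brownian tree from \Cref{subsubsection:infinite-Brownian-tree}.

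\emph{Step 1 (expansion).} Since $(a + b)^p \le 2^p(a^p + b^p)$, we have
\begin{equation*}
    \varpi(o, n)^p \le 2^{pn} \prod_{j = 1}^n \left(\eta^p + (64\alpha)^p \mathbf 1_{\widetilde E(o, j)}\right) = 2^{pn} \sum_{S \subset [1, n]_\BZ} (64\alpha)^{p\lvert S\rvert} \eta^{p(n - \lvert S\rvert)} \mathbf 1_{\bigcap_{j \in S} \widetilde E(o, j)}.
\end{equation*}
Suppose we can show $\BP[\bigcap_{j \in S} \widetilde E(o, j)] \le (C_0\alpha)^{\lvert S\rvert}$ for a universal $C_0$. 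Then the term indexed by $S$ is at most $C^{n} \alpha^{(p + 1)\lvert S\rvert} \alpha^{100p(n - \lvert S\rvert)}$. This is at most $C^n \alpha^{(p + 1)n}$ because $100p \ge p + 1$ and $\alpha < 1$. Summing over the $2^n$ subsets $S$ only changes $C$.

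\emph{Step 2 (one-scale estimate).} Use the spine representation: the subtrees hanging off $\llbracket o, \infty\rrparenthesis$ form a PPP with intensity $2\SCN \otimes \rd t$. A subtree of diameter at least $r$ has height at least $r/2$, so by~\eqref{eq:Brownian-excursion-measure} we get $\SCN(\diam \ge r) \le 1/r$. Split the spine into the disjoint intervals $I_k \defeq (32\alpha^{k + 1}, 32\alpha^k]$. Then $\widetilde E(o, j)$ is contained in $\bigcup_{k \ge j} G_{k,j}$, where $G_{k,j}$ is the event that some subtree rooted in $I_k$ has diameter at least $\alpha^{j - 1}/4$. (The spine point at distance exactly $32\alpha^{k + 1}$ a.s.\ carries no subtree, so it can be ignored.) By the PPP bound,
\begin{equation*}
    \BP[G_{k,j}] \le 2 \cdot 32\alpha^k \cdot 4\alpha^{1 - j} = 256\,\alpha^{k - j + 1}.
\end{equation*}
The events attached to distinct $k$ are independent, because the intervals $I_k$ are disjoint.

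\emph{Step 3 (multi-scale estimate; the main obstacle).} On $\bigcap_{j \in S} \widetilde E(o, j)$ there is a map $k \colon S \to \BN$ with $k(j) \ge j$ such that $G_{k(j),j}$ occurs for every $j \in S$. Take a union bound over such maps. For a fixed map, group the elements of $S$ by the value of $k$. For the group with value $k$, let $j_0$ be its smallest element and $m$ its size. The requirement at $j_0$ is the strongest, so the group costs at most $256\alpha^{k - j_0 + 1}$, and the groups contribute independent factors. Since every element of the group lies in $[j_0, k]$, we have $m \le k - j_0 + 1$. Hence the cost is at most $256^{m}\alpha^{m} \cdot \alpha^{k - j_0 + 1 - m}$.

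The surplus factor $\alpha^{k - j_0 + 1 - m}$ has to pay for the number of maps $k$ producing a given grouping. The plan is to charge each $j \in S$ a factor $\alpha^{(k(j) - j) - (\text{number of elements of its group in } [j, k(j)))}$. Summing over $k(j) \ge j$ should then give a geometric series bounded by a constant per element once $\alpha$ is small. Failing a clean charging, we can restrict to $\alpha$ below a universal threshold and absorb the combinatorial count, which is at most exponential in $\lvert S\rvert$, into $C_0$.

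This yields $\BP[\bigcap_{j \in S} \widetilde E(o, j)] \le (C_0\alpha)^{\lvert S\rvert}$, and Step 1 then completes the proof. For $\alpha$ above the threshold, the claimed bound holds trivially after enlarging $C$: indeed $\varpi(o, n) \le (1 + 64)^n$ and $\alpha^{(p + 1)n} \ge \alpha_0^{3n}$. The delicate point is the bookkeeping in Step 3. There we must make sure that the scales do not multiply the count of admissible maps by more than $C^{\lvert S\rvert}$ while still keeping a full factor $\alpha$ per element of $S$.
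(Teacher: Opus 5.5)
Your Steps 1 and 2 are fine; the one-scale bound $256\alpha^{k-j+1}$ is exactly the paper's estimate~\eqref{eq:pi-expectation-proof}. The gap is Step 3, which carries the whole content of the lemma and is not proved. Neither justification you offer works as stated.

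First, the fallback count is wrong. You sum over infinitely many maps $k\colon S\to\BN$ with $k(j)\ge j$. The groupings these maps induce are arbitrary set partitions of $S$, since every partition is realised by giving each block a distinct value above its maximum. Their number is the Bell number of $\lvert S\rvert$, which grows faster than any exponential. So ``absorb the combinatorial count, which is at most exponential in $\lvert S\rvert$'' is false.

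Second, the proposed per-element charging does not fit inside the available surplus. Take the group $\{1,2\}$ with $k=5$. The surplus is $\alpha^{k-j_0+1-m}=\alpha^{3}$, but your charges multiply to $\alpha^{(4-2)+(3-1)}=\alpha^{4}$, so they do not dominate it.

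The missing idea is to choose the witnesses canonically rather than summing over all of them. Start from the smallest element $j_1\in S$. If $\widetilde E(o,j_1)$ is witnessed by a subtree rooted in $I_{k_1}$ with $k_1\ge j_1$, that same subtree witnesses $\widetilde E(o,j)$ for every $j\in S\cap[j_1,k_1]$, because the requirement $\alpha^{j-1}/4$ only weakens as $j$ grows. Then restart from the smallest element of $S$ exceeding $k_1$.

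With this choice:
\begin{itemize}
\item The groups are consecutive blocks $g$ of $S$ with strictly increasing scales $k_g\ge\max g$, so your independence still applies.
\item There are at most $2^{\lvert S\rvert}$ block decompositions.
\item For each block and $\alpha\le 1/2$, $\sum_{k\ge\max g}256\,\alpha^{k-\min g+1}\le 512\,\alpha^{\max g-\min g+1}\le 512\,\alpha^{\lvert g\rvert}$.
\end{itemize}
Together these give $\BP[\bigcap_{j\in S}\widetilde E(o,j)]\le(1024\alpha)^{\lvert S\rvert}$, and your Step 1 then finishes the proof.

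This is exactly what the paper's sequences $n+1\ge b_0>a_1\ge b_1>\cdots$ encode. The set of $j$ for which $\widetilde E(o,j)$ occurs is split into disjoint runs $[b_k,a_k]$. Each run is witnessed by one large subtree branching off the $a_k$-th spine segment, and for the top run the innermost segment $\llbracket o,\SR_{32\alpha^{n+1}}(o)\rrbracket$ is lumped together. This yields at most $3^n$ configurations, each of probability at most $2^{8(n+1)}$ times $\alpha$ raised to the total run length.
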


Before proceeding, we observe from the PPP construction of $\SCT$ (cf.~\Cref{subsubsection:infinite-Brownian-tree}) that for each $0 \le s < t$ and $r > 0$, the probability that there exists a subtree branching off $\llbracket\SR_s(o), \SR_t(o)\rrbracket$ with diameter at least $r$ is at most 
\begin{equation}\label{eq:pi-expectation-proof}
    1 - \exp\!\left(- 2 \cdot (t - s) \cdot \scn\!\left(\sup_t e_t > r/2\right)\right) = 1 - \re^{-2(t - s)/r} \le 2(t - s)/r. 
\end{equation}

\begin{proof}[Proof of \Cref{lem:pi-expectation}]
    Fix $p \in (1, 2)$. For each sequence of integers $\Bc = (n + 1 \ge b_0 > a_1 \ge b_1 > \cdots > a_\bullet \ge b_\bullet \ge 1)$ with $\bullet \in \BN \cup \{0\}$, consider the event $F_n(\Bc)$ that the following are true:
    \begin{itemize}
        \item If $b_0 \le n$, then there exists a subtree branching off $\llbracket o, \SR_{32\alpha^{n + 1}}(o)\rrbracket$ with diameter at least $\alpha^{b_0 - 1}/4$ (write $G_n(b_0)$ for this event), in which case the event $\widetilde E(o, j)$ occurs for all $j \in [b_0, n]_\BZ$. 
        \item For each $k \in [1, \bullet]_\BZ$, there exists a subtree branching off $\llbracket\SR_{32\alpha^{a_k + 1}}(o), \SR_{32\alpha^{a_k}}(o)\rrbracket$ with diameter at least $\alpha^{b_k - 1}/4$ (write $G(a_k, b_k)$ for this event), in which case the event $\widetilde E(o, j)$ occurs for all $j \in [b_k, a_k]_\BZ$. 
        \item For each $j \in [1, n]_\BZ \setminus \left([b_0, n]_\BZ \cup \bigcup_{k \in [1, \bullet]_\BZ} [b_k, a_k]_\BZ\right)$, the event $\widetilde E(o, j)$ does not occur. 
    \end{itemize}
    One verifies immediately that the union of the events $F_n(\Bc)$ for all possible sequences $\Bc$ is equal to the whole probability space. Write
    \begin{equation*}
        \len(\Bc) \defeq (n + 1 - b_0) + (a_1 - b_1 + 1) + \cdots + (a_\bullet - b_\bullet + 1) = \#\!\left([b_0, n]_\BZ \cup \bigcup_{k = 1}^\bullet [b_k, a_k]_\BZ\right) \le n.
    \end{equation*}
    On the event $F_n(\Bc)$, we have 
    \begin{equation*}
        \varpi(o, n) = (\eta + 64\alpha)^{\len(\Bc)} \cdot \eta^{n - \len(\Bc)} \le 65^n \cdot \alpha^{100n - 99\len(\Bc)}. 
    \end{equation*}
    Since the segments $\llbracket o, \SR_{32\alpha^{n + 1}}(o)\rrbracket$ and $\llbracket\SR_{32\alpha^{a_k + 1}}(o), \SR_{32\alpha^{a_k}}(o)\rrbracket$ for $k \in [1, \bullet]_\BZ$ are disjoint, it follows that the events $G_n(b_0), G(a_1, b_1), \ldots, G(a_\bullet, b_\bullet)$ are independent. By~\eqref{eq:pi-expectation-proof}, we have 
    \begin{equation*}
        \BP\lbrack G(a_k, b_k)\rbrack \le \frac{2(32\alpha^{a_k} - 32\alpha^{a_k + 1})}{\alpha^{b_k - 1}/4} \le 2^8 \alpha^{a_k - b_k + 1}, \quad \forall k \in [1, \bullet]_\BZ. 
    \end{equation*}
    Similarly, if $b_0 \le n$, then $\BP\lbrack G_n(b_0)\rbrack \le 2^8 \alpha^{n + 2 - b_0}$. Thus, we conclude that
    \begin{align*}
        \BP\lbrack F_n(\Bc)\rbrack &\le \BP\lbrack G_n(b_0) \cap G(a_1, b_1) \cap \cdots \cap G(a_\bullet, b_\bullet)\rbrack \\
        &= \BP\lbrack G_n(b_0)\rbrack \BP\lbrack G(a_1, b_1)\rbrack \cdots \BP\lbrack G(a_\bullet, b_\bullet)\rbrack \\
        &\le 
        \begin{cases}
            2^{8(\bullet + 1)} \cdot \alpha^{(n + 2 - b_0) + (a_1 - b_1 + 1) + \cdots + (a_\bullet - b_\bullet + 1)} & \text{if } b_0 \le n; \\
            2^{8\bullet} \cdot \alpha^{(a_1 - b_1 + 1) + \cdots + (a_\bullet - b_\bullet + 1)} & \text{if } b_0 = n + 1
        \end{cases} \\
        &\le 2^{8(n + 1)} \cdot \alpha^{\len(\Bc)} \quad \text{(since } \bullet \le n\text{)}. 
    \end{align*}
    One verifies immediately that the assignment
    \begin{equation*}
        \{\text{all possible sequences }\Bc\} \to \{0, 1, 2\}^n \colon \Bc \mapsto \left(j \mapsto 
        \begin{cases}
            0 & \text{if } j \notin [b_0, n]_\BZ \cup \bigcup_{k = 1}^\bullet [b_k, a_k]_\BZ; \\
            1 & \text{if } j = a_k \text{ for some } k \in [1, \bullet]_\BZ; \\
            2 & \text{otherwise}
        \end{cases}
        \right)
    \end{equation*}
    is injective. This implies that the number of all possible sequences $\Bc$ is at most $3^n$. Thus,
    \begin{align*}
        \BE\lbrack\varpi(o, n)^p\rbrack &\le \sum_\Bc \BE\!\left\lbrack\varpi(o, n)^p \mathbf 1_{F_n(\Bc)}\right\rbrack \\
        &\le \sum_\Bc 2^{8(n + 1)} \cdot 65^{pn} \cdot \alpha^{100pn - (99p - 1)\len(\Bc)} \\
        &\le 3^n \cdot 2^{8(n + 1)} \cdot 65^{pn} \cdot \alpha^{(p + 1)n}. 
    \end{align*}
    This completes the proof. 
\end{proof}

\section{Proof of \texorpdfstring{\Cref{thm:main}}{Theorem~\ref{thm:main}}}\label{section:proof}

Let $(\SCT, d_\SCT, \SM_\SCT; o)$ be an infinite Brownian tree. We \emph{claim} that in order to prove \Cref{thm:main}, it suffices to show that $B_1(o; d_\SCT)$ almost surely has conformal dimension $1$. Let $(\SCU, x)$ be the subtree branching off $\llbracket o, \SR_{1/2}(o)\rrbracket$ with the largest diameter. It follows from the PPP construction of $\SCT$ (cf.~\Cref{subsubsection:infinite-Brownian-tree}) that the law of $(\SCU, d_\SCT|_{\SCU \times \SCU}, \SM_\SCT|_\SCU; x)$ and the Brownian tree measure $\SCN$ (cf.~\Cref{subsubsection:Brownian-tree}) are mutually absolutely continuous, which implies that the law of $(\SCU, d_\SCT|_{\SCU \times \SCU}, \SM_\SCT|_\SCU; x)$ on the event $\{\diam(\SCU; d_\SCT) < 1/2\}$ and the restriction of $\SCN$ to the subset of trees with diameter less than $1/2$ are mutually absolutely continuous. Combining this with the scaling property, we obtain that the law of $(\SCU, \SM_\SCT(\SCU)^{-1/2} d_\SCT|_{\SCU \times \SCU}, \SM_\SCT(\SCU)^{-1} \SM_\SCT|_\SCU; x)$ on the event $\{\diam(\SCU; d_\SCT) < 1/2\}$ and the law of a Brownian tree with unit volume are mutually absolutely continuous. On the other hand, on the event $\{\diam(\SCU; d_\SCT) < 1/2\}$, we have $\SCU \subset B_1(o; d_\SCT)$, which implies that the conformal dimension of $\SCU$ is almost surely $1$. Combining the above discussion, we obtain that the conformal dimension of a Brownian tree with unit volume is almost surely $1$. This completes the proof of the \emph{claim}.

It remains to prove that $B_1(o; d_\SCT)$ almost surely has conformal dimension $1$. Let $\sigma$, $\varrho$, $\varsigma$, and $\varpi$ be as in \Cref{def:weight,def:rho-sigma-pi}. Since condition~\eqref{eq:admissibility} is satisfied (cf.~\Cref{lem:admissibility}), by \Cref{subsubsection:hyperbolic-filling}, it suffices to show that for each $p \in (1, 2)$, we may choose the parameter $\alpha$ to be sufficiently small so that whenever $\{o\} = A_0 \subset A_1 \subset A_2 \subset \cdots \subset B_1(o; d_\SCT)$ is an increasing sequence of finite subsets such that each $A_n$ is maximal $\alpha^n$-separated, we have
\begin{equation}\label{eq:main-proof}
    \sum_{(x, n) \in V_n} \prod_{j = 1}^n \varrho(x, j)^p \to 0 \quad \text{as } n \to \infty. 
\end{equation}

Fix $p \in (1, 2)$. Fix $\alpha \in (0, 1)$ to be chosen later. Set $\eta \defeq \alpha^{100}$. Let $\mathop{\mathrm{proj}} \colon \BR \to \SCT$ be the canonical projection mapping. Fix $T > 0$. Let $\{x_k\}_{k \in \BN}$ be conditionally independent samples from $\SM_\SCT|_{\mathop{\mathrm{proj}}([-T, T])}$ (renormalized to be a probability measure). On the event that $B_1(o; d_\SCT) \subset \mathop{\mathrm{proj}}([-T, T])$, for each $n \in \BN$, write
\begin{equation*}
    k_n \defeq \inf\!\left\{K \in \BN : B_1(o; d_\SCT) \subset \bigcup\{B_{\alpha^n/2}(x_k; d_\SCT) : k \in [1, K]_\BZ, \ x_k \in B_1(o; d_\SCT)\}\right\}. 
\end{equation*}
By definition, for each $x \in A_n$, there exists $k \in [1, k_n]_\BZ$ such that $d_\SCT(x, x_k) < \alpha^n/2$. Moreover, since $A_n$ is $\alpha^n$-separated, this assignment $A_n \to [1, k_n]_\BZ \colon x \mapsto k$ is injective. Combining this with \Cref{lem:rho-upper-bound}, we obtain that
\begin{equation*}
    \sum_{(x, n) \in V_n} \prod_{j = 1}^n \varrho(x, j)^p \le \sum_{k \in [1, k_n]_\BZ} \varpi(x_k, n)^p. 
\end{equation*}
Set $\zeta \defeq (p - 1)/2$. By \Cref{lem:net}, almost surely on the event that $B_1(o; d_\SCT) \subset \mathop{\mathrm{proj}}([-T, T])$, we have $k_n \le \alpha^{-(2 + \zeta)n}$ for all sufficiently large $n \in \BN$. Fix $n_\ast \in \BN$. Write $E_{T,n_\ast}$ for the event that $B_1(o; d_\SCT) \subset \mathop{\mathrm{proj}}([-T, T])$ and that $k_n \le \alpha^{-(2 + \zeta)n} \text{ for all } n \ge n_\ast$. Recall from the re-rooting property of $\SCT$ (cf.~\Cref{subsubsection:infinite-Brownian-tree}) that each $(\SCT, d_\SCT, \SM_\SCT; x_k)$ has the same law as $(\SCT, d_\SCT, \SM_\SCT; o)$. Thus, for each $n \ge n_\ast$, 
\begin{align*}
    \BE\!\left\lbrack\left(\sum_{(x, n) \in V_n} \prod_{j = 1}^n \varrho(x, j)^p\right)\mathbf 1_{E_{T,n_\ast}}\right\rbrack &\le \BE\!\left\lbrack\left(\sum_{k \in [1, k_n]_\BZ} \varpi(x_k, n)^p\right)\mathbf 1_{E_{T,n_\ast}}\right\rbrack \\
    &\le \alpha^{-(2 + \zeta)n} \BE\lbrack\varpi(o, n)^p\rbrack \\
    &\le C^n \alpha^{(p - 1)n/2}, 
\end{align*}
where the last inequality follows from \Cref{lem:pi-expectation}. We may choose $\alpha$ to be sufficiently small so that $C \alpha^{(p - 1)/2} < 1$. Then it follows from the Borel--Cantelli lemma that~\eqref{eq:main-proof} holds almost surely on the event $E_{T,n_\ast}$. Since there almost surely exist $T > 0$ and $n_\ast \in \BN$ such that $E_{T,n_\ast}$ occurs, this completes the proof of \Cref{thm:main}. \qed

\bibliographystyle{alpha}
\bibliography{references}

\end{document}